\theoremstyle{plain}
\newtheorem{theo}{Theorem}[section]
\newtheorem{lemm}[theo]{Lemma}
\newtheorem{prop}[theo]{Proposition}
\theoremstyle{plain}
\theoremstyle{definition}
\theoremstyle{remark}
\newtheorem{rema}[theo]{Remark}
\newtheorem*{rema*}{Remarks}
\newcommand{\ZZ}{\mathbb{Z}}  
\newcommand{\NN}{\mathbb{N}}
\newcommand{\RR}{\mathbb{R}}
\newcommand{\ww}{\tilde{\omega}}
\newcommand{\diver}[1]{ \textnormal{div}\hspace{0.07cm} #1} 
\numberwithin{equation}{section}
\author[H. Abidi]{Hammadi Abidi}
\address{IRMAR, Universit\'e de Rennes 1\\ Campus de
Beaulieu\\ 35~042 Rennes cedex\\ France}
\email{hamadi.abidi@univ-rennes1.fr}
\author[T. Hmidi]{Taoufik Hmidi}
\address{IRMAR, Universit\'e de Rennes 1\\ Campus de
Beaulieu\\ 35~042 Rennes cedex\\ France}
\email{thmidi@univ-rennes1.fr}
\author[S. Keraani]{Sahbi Keraani}
\address{IRMAR, Universit\'e de Rennes 1\\ Campus de
Beaulieu\\ 35~042 Rennes cedex\\ France}
\email{sahbi.keraani@univ-rennes1.fr}
\title[Global  well-posedness for the  axisymmetric Euler equation]
{On the global well-posedness for the axisymmetric Euler equations}
 \date{}
\begin{document}
\begin{abstract}
This paper deals with the global well-posedness of  the $3$D
axisymmetric Euler equations for  initial data lying in  critical Besov spaces 
$B_{p,1}^{1+3/p}$.   In this case the BKM criterion is not known to be valid and to circumvent this difficulty we use a new decomposition of the vorticity.   
\end{abstract}
\subjclass[2000]{76D03 (35B33 35Q35 76D05)}
\keywords{Axisymmetric flows; Global existence; paradifferential calculus.  }

\maketitle
\section{Introduction}
\hskip 12pt
The evolution of homogeneous inviscid incompressible fluid flows \mbox{in 
$\mathbb R^3$} is governed by the
Euler system
\begin{equation}
\label{E}
\left\lbrace
\begin{array}{l}
\partial_t u+(u\cdot\nabla)u+\nabla \pi=0,\\
\diver u=0,\\
{u}_{| t=0}=u_0.  
\end{array}
\right.  
\end{equation}
Here,  $u=u(t,x)\in \mathbb R^3$ denotes the velocity of the fluid, the scalar \mbox{function  
$\pi=\pi(t,x)$}  stands for  the  scalar pressure and $u\cdot\nabla=\sum_{j=1}^3u^j\partial_j$.  

The local  theory of the system \eqref{E} seems to be in a satisfactory state and several  results are obtained by numerous authors  in many standard function spaces.   In \cite{Kato}, Kato proved the local existence  and uniqueness for initial \mbox{data $u_0\in H^s(\mathbb R^3)$} with $s>5/2$ and \mbox{Chemin \cite{Ch1}} gave similar results for initial data lying in H\"olderian \mbox{spaces $C^r$} 
with $r>1.  $

Other local results are recently obtained by \mbox{Chae \cite{Chae}} in critical Besov \mbox{spaces 
$B_{p,1}^{1+3/p},$} with $p\in]1,\infty[$ and by Pak and Park \cite{Hee} for the \mbox{space $B_{\infty,1}^1$. }  
Notice that  these spaces   have the same scaling as  Lipschitz functions (the space which is relevant for the hyperbolic theory)  and in this sens they are called  critical.  

The question of global existence (even for a smooth initia data) is still
open
and continues to be  one of the most leading problem in mathematical fluid mechanics.    The well-known BKM criterion  \cite{Beale}  ensures that the development of finite time singularities for Kato's solutions is related to the blowup  of the $L^\infty$ norm of the vorticity near the maximal time existence.   A direct consequence of this result is the global well-posedness  of   two-dimensional Euler solutions for smooth initial data since the vorticity is only advected and then  does not grow.   We emphasize that new geometric blowup criteria are recently discovered by Constantin, Fefferman and Majda \cite{CFM}.   

Let us recall that, in space dimension three, the vorticity is  defined by the vector $\omega={\rm curl}\,  u$ and satisfies the equation 
$$
\partial_t \omega+(u\cdot\nabla)\omega-(\omega \cdot\nabla)u=0.  
$$
 The main 
difficulty for establishing global regularity is to understand how the vortex stretching term
$(\omega \cdot\nabla)u$ affects the dynamic of the fluid.   

\

While global existence is not proved for arbitrary initial smooth data, there are partial results in the case of the so-called axisymmetric flows without swirl.  
We say that a  vector field $u$ is  axisymmetric  if it has the form:
$$
u(x, t) = u^r(r, z , t)e_r + u^z (r, z , t)e_z,\quad x=(x_{1},x_{2},z),\quad r=({x_{1}^2+x_{2}^2})^{\frac12},
$$ 
where  $\big(e_r, e_{\theta} , e_z\big)$ is the cylindrical basis of $\mathbb R^3$ and the components $u^r$ and $u^z$ do not depend on the angular variable.   The main feature  of axisymmetric  flows   arises in the  
vorticity  which takes the form (more precise discussion will be done in Proposition \ref{mim0} and \ref{mim}),
$$
\omega=(\partial_zu^r-\partial_ru^z) e_{\theta}
$$
and satisfies 
  \begin{equation}
 \label{tourbillon}
\partial_t \omega +(u\cdot\nabla)\omega =\frac{u^r}{r}\omega.  
\end{equation}
Consequently the quantity $\alpha:=\omega/r$ is only advected by the flow, that is 
\begin{equation}
\label{equation_importante}
\partial_t\alpha+(u\cdot\nabla)\alpha=0.  
\end{equation}
This fact induces the conservation of all the norms  $\|\alpha\|_{L^p}, 1\leq p\leq\infty.  $ \mbox{ In \cite{Ukhovskii}, }
  Ukhovskii and Yudovich took advantage of these conservation laws  
to prove the global existence for axisymmetric initial data with finite energy and satisfying in addition  $\omega_{0}\in L^2\cap L^\infty$ and $\frac{\omega_{0}}{r}\in L^2\cap L^\infty.  $ In terms of Sobolev regularity these assumptions are satisfied if the velocity   $u_{0}$ belongs to $ H^s$ with $s>{7\over 2}$.   This is far from the critical regularity of local existence theory $s=\frac52.  $ The optimal result in Sobolev spaces is done  by Shirota and Yanagisawa  in \cite{Taira} who proved global existence in $H^s,$ with  $s>\frac52$.  
Their proof is based on the boundness of the quantity $\|\frac{u^r}{r}\|_{L^\infty}$ by using Biot-Savart law.   We mention also the reference  \cite{Saint} where  similar results  are given in different function spaces.   In a recent work \cite{rd}, Danchin has weakened the Ukhoviskii and  Yudovich conditions.   More precisely, he obtains global existence and uniqueness for initial data  $\omega_0\in L^{3,1}\cap L^\infty$ and $\frac{\omega_0}{r}\in L^{3,1}.   $ Here, we denote by  $L^{3,1}$ the Lorentz space.  

\

 In this paper we address the question of global existence in the critical \mbox{spaces $B_{p,1}^{1+3/p}$.}    Comparing to the sub-critical spaces this problem is extremely hard to deal with   because we are deprived of an important tool which is the BKM criterion.   Even in space dimension two we encounter the same problem.   Although the \mbox{quantity $\|\omega(t)\|_{L^\infty}$} is conserved, this is not sufficient to propagate for all time the initial regularity.   As it was pointed by Vishik in \cite{vishik} the significant quantity  is $\|\omega(t)\|_{B_{\infty,1}^0}$ and its control needs the use of  the special structure of the vorticity, which is only transported by the flow.   
 
 \
 
 Owing to the streching term $\omega\, u^r/r$, the estimate  of $\|\omega(t)\|_{B_{\infty,1}^0}$ for axisymmetric flows is more complicated and needs as we shall see a  refined analysis of the geometric structure of the vorticity.   

The main result of this paper can  be stated as follows (for the definition of function spaces see next section).  
\begin{theo}
\label{thm0}  Assume  $p\in[1,\infty]$.     Let $u_{0}$ be an axisymmetric divergence free vector field belonging to  $ B_{p,1}^{1+3/p},$ such that  its vorticity satisfies $\displaystyle{{\omega_{0}}/{r}\in L^{3,1}}$.     Then the system \eqref{E} has a unique global solution $
 u\in{\mathcal C}(\mathbb R_
 +;\,  B^{1+3/p}_{p,1}).    $

\end{theo}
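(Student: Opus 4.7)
The plan is to upgrade the local-in-time well-posedness in $B^{1+3/p}_{p,1}$ (known from Chae and Pak-Park) to a global statement by producing an \emph{a priori} bound on $\|u(t)\|_{B^{1+3/p}_{p,1}}$ that is locally bounded in $t$. The standard transport estimate in Besov spaces applied to the velocity equation (with pressure handled via Riesz transforms) reduces this to a global control of $\int_0^T\|\nabla u(t)\|_{L^\infty}\,dt$. Since $u$ is divergence-free, one has the classical bound $\|\nabla u\|_{L^\infty}\le C(\|u\|_{L^2}+\|\omega\|_{B^0_{\infty,1}})$, so the whole task collapses to showing that $\|\omega(t)\|_{B^0_{\infty,1}}$ remains finite on every bounded time interval. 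This is precisely the estimate that falls out of reach of the BKM criterion at critical regularity, and motivates the new decomposition of the vorticity.

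First, I would harvest the conservation laws provided by the axisymmetric structure. From \eqref{equation_importante}, the quantity $\alpha=\omega/r$ is purely transported, so $\|\alpha(t)\|_{L^{3,1}}=\|\alpha_0\|_{L^{3,1}}$ for all $t\ge 0$. A careful Biot–Savart analysis for axisymmetric fields in Lorentz spaces (in the spirit of Shirota–Yanagisawa and Danchin) then delivers the time-independent pointwise bound $\|u^r/r\|_{L^\infty}\le C\|\alpha\|_{L^{3,1}}$. Injecting this into the stretching equation \eqref{tourbillon} and applying Gronwall gives $\|\omega(t)\|_{L^p}\le\|\omega_0\|_{L^p}e^{Ct}$ for every $p\in[1,\infty]$, so all Lebesgue norms of $\omega$ are globally bounded on compact time intervals.

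The core obstacle is the $B^0_{\infty,1}$ estimate. Applying the usual transport estimate directly to \eqref{tourbillon} produces an exponential of $\int_0^t\|\nabla u\|_{L^\infty}\,ds$ on the right-hand side, i.e. exactly the quantity one is trying to control; this circularity is why the BKM strategy fails here. The remedy is a splitting $\omega=\omega_1+\omega_2$ that separates the contribution responsible for the vortex stretching from the part that behaves like in the two-dimensional case: $\omega_1$ is taken to be the solution of the \emph{pure} transport equation $\partial_t\omega_1+u\cdot\nabla\omega_1=0$ with data $\omega_0$, and $\omega_2=\omega-\omega_1$ absorbs the source $(u^r/r)\omega$ while vanishing at $t=0$. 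For $\omega_2$ one exploits the zero initial value, the $L^\infty$ bound on $u^r/r$ already obtained, and the $L^p$ bounds on $\omega$ to gain extra regularity. For $\omega_1$ one writes $\omega_1=r\tilde\alpha$ where $\tilde\alpha$ is again purely transported, and uses Bony's paradifferential decomposition together with the Lipschitz character of $r$ (away from the axis) to control $\|\omega_1\|_{B^0_{\infty,1}}$ by a quantity in which $\|\omega\|_{B^0_{\infty,1}}$ appears only \emph{linearly} under the time integral. The hard part is precisely to organize this paraproduct analysis so that no super-linear coupling remains; once this is done a Gronwall closure yields the desired global bound on $\|\omega\|_{L^\infty_T B^0_{\infty,1}}$.

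With $\|\omega(t)\|_{B^0_{\infty,1}}$ globally controlled, the classical transport estimate in $B^{1+3/p}_{p,1}$ propagates the full regularity of $u$, the local existence time can be continued indefinitely, and uniqueness is inherited from the local theory, concluding the proof of Theorem~\ref{thm0}.
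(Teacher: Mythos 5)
Your reduction of the problem to a global bound on $\|\omega(t)\|_{B^0_{\infty,1}}$, and your use of the conservation of $\|\omega/r\|_{L^{3,1}}$ together with the Biot--Savart bound $\|u^r/r\|_{L^\infty}\lesssim\|\omega_0/r\|_{L^{3,1}}$ to control all Lebesgue norms of $\omega$, match the paper exactly. The gap is in the decomposition you propose for the critical step. Writing $\omega=\omega_1+\omega_2$ with $\omega_1$ purely transported and $\omega_2$ the Duhamel part does not break the circularity: the transport estimate for $\omega_2$ in $B^0_{\infty,1}$ still produces the factor $e^{C\int_0^t\|\nabla u\|_{L^\infty}}$ in front of $\int_0^t\|(u^r/r)\,\omega\|_{B^0_{\infty,1}}d\tau$, and this time there is no compensating small factor; moreover the source $(u^r/r)\,\omega$ is only controlled in $L^\infty$ and $L^p$, which does not bound its $B^0_{\infty,1}$ norm, so ``zero initial data plus $L^p$ bounds'' cannot by themselves yield the extra summability in frequency. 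There is also a structural error: if $\omega_1$ solves the pure transport equation, then $\omega_1/r$ is \emph{not} transported (one computes $\partial_t(\omega_1/r)+u\cdot\nabla(\omega_1/r)=-(u^r/r)(\omega_1/r)$); it is the full vorticity $\omega$, satisfying the stretched equation \eqref{tourbillon}, whose ratio $\omega/r$ is advected, so the representation $\omega_1=r\tilde\alpha$ with $\tilde\alpha$ transported is not available.

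The paper's decomposition is of a different nature: one sets $\omega=\sum_{q\ge -1}\tilde\omega_q$ where each $\tilde\omega_q$ solves the \emph{full} vorticity equation (stretching term included) with frequency-localized data $\Delta_q\omega_0$ (Proposition \ref{Thm89}). Two complementary bounds are then proved: the uniform bound $\|\tilde\omega_q(t)\|_{L^\infty}\le\|\Delta_q\omega_0\|_{L^\infty}e^{Ct\|\omega_0/r\|_{L^{3,1}}}$, which uses only the $L^\infty$ control of $u^r/r$, and the quasi-orthogonality estimate $\|\Delta_j\tilde\omega_q(t)\|_{L^\infty}\lesssim 2^{-|j-q|}e^{CU(t)}\|\Delta_q\omega_0\|_{L^\infty}$, whose proof requires the delicate commutator $[\Delta_j,1/x_2]$ and the anisotropic dilation lemma to exploit the vanishing of $u^2$ and $\omega^1$ on the axis. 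Splitting the double sum over $|j-q|\ge N$ and $|j-q|<N$ and choosing $N\approx U(t)$ converts the dangerous exponential $e^{CU(t)}$ into a linear factor $U(t)$, which is what allows the Gronwall closure. Neither of these two ingredients appears in your sketch, and without them the $B^0_{\infty,1}$ bound does not close.
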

\begin{rema}
\label{RK1}
 We mention that for $p<3$ the condition $\frac{\omega_{0}}{r}\in L^{3,1}$ is automatically derived from $u_{0}\in B_{p,1}^{1+3/p}$ (see Proposition \ref{lorentz} below).    
 \end{rema}
\begin{rema}
\label{RK}
In the proof of  this theorem we have established the following global in time   estimates
 \begin{equation*}
 \label{controle}
\|u(t)\|_{ B^{s_p}_{p,1}}
\leq C_{0}e^{e^{\exp{C_{0}t}}}\quad\hbox{and}\quad \|{\omega}(t)/{r}\|_{L^{3,1}}\leq\|{\omega_{0}}/{r}\|_{L^{3,1}},
\end{equation*}
 where the  \mbox{constant $C_{0}$} depends only on the initial data norms.    
 \end{rema}

The proof is heavily related to two crucial estimates, the first  one is the $L^\infty $ bound of the vorticity for every time which is obtained from Biot-Savart law and the use of Lorentz spaces, see Proposition \ref{u/r}.   Unfortunately as it has been discussed above this is not sufficient to show global existence because we do not know whether the BKM criterion  works in the critical spaces or not.   Thus we are led to establish a second new estimate for the vorticity in  Besov \mbox{space $B_{\infty,1}^0$} (see Proposition \ref{ur1}).This allows us to bound for every time the Lipschitz norm of the velocity which is  sufficient to prove global existence.The control of $\|\omega(t)\|_{B_{\infty,1}^0}$ is the most important part of this paper and it is done in a non fashion way in which the axisymmetric geometry plays a key role.  

\

 \indent The rest of this paper is organized as follows. In section $ 2$   we recall some function spaces and gather some  preliminary estimates.   Section $3$ is devoted to the study of some geometric properties of any solution to a vorticity equation model.    The  proof  of  
\mbox{Theorem \ref{thm0}} is   done in several steps in \mbox{section 4.  } 
 
\section{Notations and preliminaries} 
\hskip 12pt
Throughout this paper, $C$ stands for some real positive constant which may be different in each occurrence.   We shall sometimes alternatively use the notation $X\lesssim Y$ for an inequality of type $X\leq CY$.  

\

$\bullet $ Let us start with  a classical dyadic decomposition of the full space (see for instance \cite{Ch1}):
there exist two radial  functions  $\chi\in \mathcal{D}(\mathbb R^3)$ and  
$\varphi\in\mathcal{D}(\mathbb R^3\backslash{\{0\}})$ such that
\begin{itemize}
\item[\textnormal{i)}]
$\displaystyle{\chi(\xi)
+\sum_{q\geq0}\varphi(2^{-q}\xi)=1}$\; 
$\forall\xi\in\mathbb R^3,$
\item[\textnormal{ii)}]
$\displaystyle{\sum_{q\in\mathbb Z}\varphi(2^{-q}\xi)=1}$\;
if\; $\xi\neq 0,$
\item[\textnormal{iii)}]
$ \textnormal{supp }\varphi(2^{-p}\cdot)\cap
\textnormal{supp }\varphi(2^{-q}\cdot)=\varnothing,$ if  $|p-q|\geq 2$,\\

\item[\textnormal{iv)}]
$\displaystyle{q\geq1\Rightarrow \textnormal{supp}\chi\cap \textnormal{supp }\varphi(2^{-q})=\varnothing}$.  
\end{itemize}
For every $u\in{\mathcal S}'(\mathbb R^3)$ one defines the nonhomogeneous Littlewood-Paley operators by,
$$
\Delta_{-1}u=\chi(\hbox{D})u;\, \forall
q\in\mathbb N,\;\Delta_qu=\varphi(2^{-q}\hbox{D})u\; \quad\hbox{and}\quad
S_qu=\sum_{-1\leq j\leq q-1}\Delta_{j}u.  
$$
One can easily prove that for every tempered distribution $u,$
\begin{equation}\label{dr2}
u=\sum_{q\geq -1}\Delta_q\,u.  
\end{equation}
The homogeneous operators are defined as follows
$$
\forall q\in\mathbb Z,\,\quad\dot{\Delta}_{q}u=\varphi(2^{-q}\hbox{D})v\quad\hbox{and}\quad\dot{S}_{q}u=\sum_{j\leq q-1}\dot{\Delta}_{j}u.   
$$
We notice that these operators can be written as a convolution.   For example for $q\in\ZZ,\,$ $\dot\Delta_{q}u=2^{3q}h(2^q\cdot)\star u,$ where $h\in\mathcal{S}$ and $\widehat{h}(\xi)=\varphi(\xi).  $

For the homogeneous decomposition, the identity (\ref{dr2}) is not true due to the polynomials but we have,
$$
u=\sum_{q\in\mathbb Z}\dot\Delta_qu
\quad\forall\,u\in 
{\mathcal {S}}'(\mathbb R^3)/{\mathcal{P}}[\mathbb R^3],
$$
where ${\mathcal{P}}[\mathbb R^3]$ is the whole of 
polynomials (see \cite{PE}).

$\bullet $
In the sequel we will  make an extensive use of Bernstein inequalities (see for \mbox{example \cite{Ch1}}).  
\begin{lemm}\label{lb}\;
 There exists a constant $C$ such that for $k\in\NN$, \mbox{$1\leq a\leq b$}   and $\psi\in L^a$, we have
\begin{eqnarray*}
\sup_{|\alpha|=k}\|\partial ^{\alpha}S_{q}\psi\|_{L^b}&\leq& C^k\,2^{q(k+d(\frac{1}{a}-\frac{1}{b}))}\|S_{q}\psi\|_{L^a},
\end{eqnarray*}
and
\begin{eqnarray*}
\ C^{-k}2^
{qk}\|\dot{\Delta}_{q}\psi\|_{L^a}&\leq&\sup_{|\alpha|=k}\|\partial ^{\alpha}\dot{\Delta}_{q}\psi\|_{L^a}\leq C^k2^{qk}\|\dot{\Delta}_{q}\psi\|_{L^a}.  
\end{eqnarray*}
\end{lemm}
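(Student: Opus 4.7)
The plan is to realize both Littlewood--Paley projectors as convolutions with scaled Schwartz kernels and then combine Young's convolution inequality with scaling arguments. Pick two auxiliary cut-off functions $\widetilde\chi\in\mathcal D(\mathbb R^d)$ with $\widetilde\chi\equiv 1$ on $\mathrm{supp}\,\chi$, and $\widetilde\varphi\in\mathcal D(\mathbb R^d\setminus\{0\})$ with $\widetilde\varphi\equiv 1$ on $\mathrm{supp}\,\varphi$. Then by construction $S_q=\widetilde\chi(2^{-q}\mathrm D)S_q$ and $\dot\Delta_q=\widetilde\varphi(2^{-q}\mathrm D)\dot\Delta_q$, which we write as
$$
S_q\psi=h_q\star S_q\psi,\qquad \dot\Delta_q\psi=g_q\star\dot\Delta_q\psi,
$$
where $h_q(x)=2^{qd}h(2^qx)$, $g_q(x)=2^{qd}g(2^qx)$ and $h,g\in\mathcal S(\mathbb R^d)$ are the inverse Fourier transforms of $\widetilde\chi,\widetilde\varphi$.

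Next, for the first inequality I differentiate under the convolution, $\partial^\alpha S_q\psi=(\partial^\alpha h_q)\star S_q\psi$, and apply Young's inequality with exponents $c$ defined by $1+\tfrac1b=\tfrac1c+\tfrac1a$. A direct change of variables gives $\|\partial^\alpha h_q\|_{L^c}=2^{q(|\alpha|+d(1/a-1/b))}\|\partial^\alpha h\|_{L^c}$, and since $h$ is Schwartz one has $\|\partial^\alpha h\|_{L^c}\leq C^{|\alpha|}$ (e.g.\ via $\|\partial^\alpha h\|_{L^c}\lesssim\sup_{|\beta|\leq d+1}\|(1+|x|)^{d+1}\partial^{\alpha+\beta}h(x)\|_{L^\infty}$, whose factorial dependence on $|\alpha|$ is absorbed into a geometric constant). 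This yields the first estimate. The upper bound $\|\partial^\alpha\dot\Delta_q\psi\|_{L^a}\leq C^k 2^{qk}\|\dot\Delta_q\psi\|_{L^a}$ is obtained verbatim by the same argument with $a=b$ and $g_q$ in place of $h_q$.

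For the reverse Bernstein inequality I exploit that $|\xi|$ is bounded below by a positive constant on $\mathrm{supp}\,\widetilde\varphi$. Invoking the multinomial identity $\sum_{|\alpha|=k}\binom{k}{\alpha}\xi^{2\alpha}=|\xi|^{2k}$, I set
$$
m_\alpha(\xi):=\binom{k}{\alpha}\frac{(-i\xi)^\alpha}{|\xi|^{2k}}\,\widetilde\varphi(\xi),
$$
which is smooth and compactly supported in $\mathbb R^d\setminus\{0\}$, and satisfies $\sum_{|\alpha|=k}(i\xi)^\alpha m_\alpha(\xi)=\widetilde\varphi(\xi)$ on $\mathrm{supp}\,\widetilde\varphi$. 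After rescaling, one obtains the reconstruction formula
$$
\dot\Delta_q\psi=2^{-qk}\sum_{|\alpha|=k}K_{\alpha,q}\star\partial^\alpha\dot\Delta_q\psi,\qquad K_{\alpha,q}(x)=2^{qd}K_\alpha(2^qx),
$$
with $\widehat{K_\alpha}=m_\alpha\in\mathcal S$. Applying Young's inequality, together with the scaling-invariant bound $\|K_{\alpha,q}\|_{L^1}=\|K_\alpha\|_{L^1}\leq C^k$, concludes the lower bound.

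The main technical point is the last step: showing that the reconstruction formula can be arranged so that the implicit constants depend only polynomially (hence geometrically) on $k$. This is not difficult once one uses the multinomial identity above (and, if needed, a uniform bound on $\|K_\alpha\|_{L^1}$ derived from $\widetilde\varphi/|\xi|^{2k}$ via integration by parts on the annulus); but it is the only step where one has to pay attention rather than simply invoking Young's inequality.
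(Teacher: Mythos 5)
Your proof is correct and follows the standard route: the paper itself gives no proof of this lemma (it simply refers to \cite{Ch1}), and the scheme you use --- realizing $S_q$ and $\dot\Delta_q$ as convolutions with dilated Schwartz kernels, applying Young's inequality with the scaling $\|\partial^\alpha h_q\|_{L^c}=2^{q(|\alpha|+d(1/a-1/b))}\|\partial^\alpha h\|_{L^c}$, and inverting via the multinomial identity $\sum_{|\alpha|=k}\binom{k}{\alpha}\xi^{2\alpha}=|\xi|^{2k}$ on the annulus for the reverse inequality --- is exactly the classical argument found there. The only cosmetic slip is in the parenthetical justification of $\|\partial^\alpha h\|_{L^c}\leq C^{k}$: the weighted sup should be controlled by $L^1$ norms of derivatives of $\xi^\alpha\widehat{h}$ on the (compact) Fourier support, not by $\partial^{\alpha+\beta}h$ in physical space, but this does not affect the validity of the conclusion or of the lemma.
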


Let us now introduce the basic tool of  the paradifferential calculus which is  Bony's decomposition  \cite{b}.   It distinguishes in   a product 
$uv$  three parts as follows: 
$$
uv=T_u v+T_v u+\mathcal{R}(u,v),
$$
where
\begin{eqnarray*}
T_u v=\sum_{q}S_{q-1}u\Delta_q v, \quad\hbox{and}\quad \mathcal{R}(u,v)=
\sum_{q}\Delta_qu \widetilde \Delta_{q}v,
\end{eqnarray*}
$$
\textnormal{with}\quad {\widetilde \Delta}_{q}=\sum_{i=-1}^{1}\Delta_{q+i}.  
$$
$T_{u}v$ is called paraproduct of $v$ by $u$ and  $\mathcal{R}(u,v)$ the remainder term.   

 Let $(p,r)\in[1,+\infty]^2$ and $s\in\mathbb R,$ then the nonhomogeneous  Besov 
\mbox{space $B_{p,r}^s$} is 
the set of tempered distributions $u$ such that
$$
\|u\|_{B_{p,r}^s}:=\Big( 2^{qs}
\|\Delta_q u\|_{L^{p}}\Big)_{\ell^{r}}<+\infty.  
$$
We remark that we have the identification  $B_{2,2}^s=H^s$.   Also, by using the Bernstein inequalities  we get easily 
$$
B^s_{p_1,r_1}\hookrightarrow
B^{s+3({1\over p_2}-{1\over p_1})}_{p_2,r_2}, \qquad p_1\leq p_2\quad and \quad  r_1\leq r_2.  
$$

\

$\bullet $  For a measurable function $f$ we define its nonincreasing rearrangement  by
$$
f^{\ast}(t):= \inf\Big\{s,\;\mu\big(\{ x,\; |f (x)| > s\}\big)\leq t\Big\},
$$
where $\mu$ denotes the usual Lebesgue measure.  
For $(p,q)\in [1,+\infty]^2,$ the Lorentz space $L^{p,q}$ is the set of  functions $f$ 
such that $\|f\|_{L^{p,q}}<\infty,$ with
$$
\|f\|_{L^{p,q}}:=\left\lbrace
\begin{array}{l}\displaystyle
\Big(\int_0^\infty[t^{1\over p}f^{\ast}(t)]^q{dt\over t}\Big)^{1\over q}, \quad \hbox{for}\;1\leq q<\infty\\
\displaystyle\sup_{t>0}t^{1\over p}f^{\ast}(t),\quad \hbox{for}\;q=\infty.  
\end{array}
\right.  
$$
Notice that we can also define Lorentz spaces  by real interpolation from Lebesgue spaces:
$$
(L^{p_0},L^{p_1})_{(\theta,q)}=L^{p,q},
$$
where
$
1\leq p_0<p<p_1\leq\infty,
$
$\theta$ satisfies ${1\over p}={1-\theta\over p_0}
+{\theta\over p_1}$ and $1\leq q\leq\infty$.  
We have the classical properties:
\begin{equation}\label{imbed0}
\|uv\|_{L^{p,q}}
\le
\|u\|_{L^\infty}\|v\|_{L^{p,q}}.  
\end{equation}
\begin{equation}
\label{imbed23}L^{p,q}\hookrightarrow L^{p,q'},\forall\, 1\leq p\leq\infty; 1\leq q\leq q'\leq \infty\quad \hbox{and}\quad L^{p,p}=L^p.  
\end{equation}

\

The next proposition precises the statement of Remark \ref{RK1}.  
\begin{prop}\label{lorentz}
Let $1\leq p<3$  and $u\in B_{p,1}^{1+3/p}(\mathbb R^3)$ be an axisymmetric divergence free vector field.   If we denote by $\omega$ its vorticity, then we have
$$
\|{\omega}/{r}\|_{L^{3,1}}\lesssim \|u\|_{B_{p,1}^{1+3/p}}.  
$$
\end{prop}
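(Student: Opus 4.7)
The plan is to estimate $\omega/r$ by a dyadic decomposition, bounding each block in $L^p$ and in $L^\infty$, interpolating to $L^{3,1}$, and summing over frequencies. The key structural input is that, $u$ being axisymmetric without swirl, $\omega=\omega_\theta e_\theta$; since $\Delta_q$ is a convolution against a radial function, each block inherits the same form, $\Delta_q\omega=(\Delta_q\omega)_\theta e_\theta$, with $(\Delta_q\omega)_\theta$ axisymmetric and vanishing on the axis $\{r=0\}$. In Cartesian coordinates, $(\Delta_q\omega)_2=(\Delta_q\omega)_\theta\,x_1/r$, so this component vanishes on the whole hyperplane $\{x_1=0\}$, and one has the pointwise identity $|\Delta_q\omega|/r=|(\Delta_q\omega)_2|/|x_1|$ almost everywhere.

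For $p>1$, I would apply the one-dimensional Hardy inequality in the variable $x_1$ to $(\Delta_q\omega)_2(\cdot,x_2,z)$ and integrate in $(x_2,z)$; combined with Bernstein this gives
\begin{equation*}
\|(\Delta_q\omega)/r\|_{L^p}\lesssim\|\partial_1(\Delta_q\omega)_2\|_{L^p}\lesssim 2^q\|\Delta_q\omega\|_{L^p}.
\end{equation*}
For the $L^\infty$ bound, a Taylor expansion from the axis together with Bernstein's estimate $\|\nabla\Delta_q\omega\|_{L^\infty}\lesssim 2^q\|\Delta_q\omega\|_{L^\infty}$ yields
\begin{equation*}
\|(\Delta_q\omega)/r\|_{L^\infty}\lesssim 2^q\|\Delta_q\omega\|_{L^\infty}\lesssim 2^{q(1+3/p)}\|\Delta_q\omega\|_{L^p}.
\end{equation*}

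Since $p<3$, a direct computation on the nonincreasing rearrangement gives the Lorentz interpolation inequality $\|f\|_{L^{3,1}}\lesssim\|f\|_{L^p}^{p/3}\|f\|_{L^\infty}^{1-p/3}$. Applying this to the two previous estimates yields
\begin{equation*}
\|(\Delta_q\omega)/r\|_{L^{3,1}}\lesssim 2^{3q/p}\|\Delta_q\omega\|_{L^p},
\end{equation*}
and summing over $q\geq-1$ gives $\|\omega/r\|_{L^{3,1}}\lesssim\|\omega\|_{B_{p,1}^{3/p}}\lesssim\|u\|_{B_{p,1}^{1+3/p}}$. The borderline case $p=1$ is then handled by first embedding $B_{1,1}^{4}\hookrightarrow B_{2,1}^{5/2}$ and invoking the case $p=2$.

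The main obstacle is the Hardy-type step. The naive Hardy inequality, dividing by the distance to the codimension-two axis, only closes for $p<2$, which is insufficient as soon as $p\in[2,3)$. The geometric observation specific to the no-swirl setting---namely that, while $\omega_\theta$ vanishes only on the axis, its Cartesian components $\omega_1,\omega_2$ each vanish on an entire coordinate hyperplane---reduces the matter to a one-dimensional Hardy inequality and restores the full range $p>1$. Verifying that the dyadic blocks preserve both the axisymmetric-without-swirl structure and the vanishing on $\{x_1=0\}$, via the rotational covariance of radial convolutions, is the other technical point that must be nailed down.
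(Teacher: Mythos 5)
Your argument is correct, but it follows a genuinely different route from the paper's. The paper proves the statement in two global steps: first the embedding $B_{p,1}^{3/p-1}\hookrightarrow L^{3,1}$, obtained by real interpolation between $B^0_{p,1}\hookrightarrow L^p$ and $B^{3/p-3/r}_{p,1}\hookrightarrow L^r$ together with the identity $(B^0_{p,1},B^{3/p-3/r}_{p,1})_{(\theta,1)}=B^{3/p-1}_{p,1}$, which yields $\|\nabla\omega\|_{L^{3,1}}\lesssim\|u\|_{B_{p,1}^{1+3/p}}$; second, the Hardy-type bound $\|\omega/r\|_{L^{3,1}}\lesssim\|\nabla\omega\|_{L^{3,1}}$, proved by a Taylor expansion from the axis (using only $\omega(0,0,z)=0$, since $|x_1|/r,|x_2|/r\le1$) and the anisotropic dilation scaling $\|\nabla\omega(\tau\cdot,\tau\cdot,\cdot)\|_{L^{3,1}}\lesssim\tau^{-2/3}\|\nabla\omega\|_{L^{3,1}}$, whose exponent is integrable on $]0,1[$; this works uniformly for all $1\le p<3$ with no case distinction. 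You instead work block by block: the structural facts you need --- that $\Delta_q\omega$ is again the curl of an axisymmetric field without swirl and that its Cartesian components vanish on coordinate hyperplanes --- are exactly the content of Proposition \ref{mim0}, so that point is already nailed down; then the one-dimensional Hardy inequality in $x_1$ and the elementary rearrangement interpolation $\|f\|_{L^{3,1}}\lesssim\|f\|_{L^p}^{p/3}\|f\|_{L^\infty}^{1-p/3}$ give $\|\Delta_q\omega/r\|_{L^{3,1}}\lesssim2^{3q/p}\|\Delta_q\omega\|_{L^p}$ (the exponent count $p/3+(1+3/p)(1-p/3)=3/p$ checks out), and you sum using that $L^{3,1}$ is normable. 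What your approach buys is the avoidance of the real-interpolation identity for Besov spaces, at the price of excluding $p=1$ (where Hardy fails), which you correctly repair by the embedding $B^4_{1,1}\hookrightarrow B^{5/2}_{2,1}$; note also that your exact identity $|\Delta_q\omega|/r=|(\Delta_q\omega)_2|/|x_1|$ exploits the hyperplane vanishing of the individual components more strongly than the paper does, which only uses vanishing on the axis $r=0$.
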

\begin{proof}
First we start with showing the embedding  $B_{p,1}^{\frac{3}{p}-1}\hookrightarrow L^{3,1}.$  Let $(p,r)$ be a fixed   exponents pair   satisfying $
1\leq p<3<r\leq\infty
$.   By definition, we have
 $$
(L^{p},L^{r})_{(\theta,1)}=L^{3,1},\quad\hbox{
with }\quad{1\over3}=\frac{1-\theta}{ p}
+{\theta\over r}\cdot
$$
According to  Bernstein inequalities, we have
$$
B^{{3\over p}-{3\over r}}_{p,1}
\hookrightarrow B^0_{r,1}\hookrightarrow L^{r}
\quad\mbox{and}\quad
B^0_{p,1}\hookrightarrow L^{p}.  
$$
Consequently,
$$
(B^0_{p,1},B^{{3\over p}-{3\over r}}_{p,1})_{(\theta,1)}
\hookrightarrow L^{3,1}.  
$$
On the other hand, we have    (see for instance \cite{BE} page 152), 
$$
(B^0_{p,1},B^{{3\over p}-{3\over r}}_{p,1})_{(\theta,1)}
=B^{\theta({3\over p}-{3\over r})}_{p,1}
=B^{{3\over p}-1}_{p,1}.  
$$
This completes the proof of the embedding  $B_{p,1}^{\frac{3}{p}-1}\hookrightarrow L^{3,1}.  $
 From this it ensures
\begin{eqnarray*}
\|\nabla \omega\|_{L^{3,1}}&\lesssim& \|\nabla\omega\|_{B_{p,1}^{\frac{3}{p}-1}}
\\
&\lesssim&\|u\|_{B_{p,1}^{1+3/p}}.  
\end{eqnarray*}
It remains  to show the following estimate
$$
\|\omega/r\|_{L^{3,1}}\lesssim\|\nabla\omega\|_{L^{3,1}}.  
$$
Since $B_{p,1}^{\frac{3}{p}}  \hookrightarrow B_{\infty,1}^{0}  \hookrightarrow  C^0$, then $\omega$ is a continuous function and    Proposition \ref{mim0} implies  $\omega(0,0,z)=0.  $  By a standard smoothing procedure we may assume that $\omega$ is sufficiently smooth.  
According to Taylor formula we write 
$$
\omega(x_{1},x_{2},z)=\int_{0}^1\Big(x_{1}\partial_{x_{1}}\omega(\tau x_{1},\tau x_{2},z)+x_{2}\partial_{x_{2}}\omega(\tau x_{1},\tau x_{2},z)\Big)d\tau.  
$$
Therefore we obtain from (\ref{imbed0}) and by homogeneity
\begin{eqnarray*}
\|\omega/r\|_{L^{3,1}}&\lesssim&\int_{0}^1\|\nabla\omega(\tau\cdot,\tau\cdot,\cdot)\|_{L^{3,1}}d\tau\\
&\lesssim& \|\nabla\omega\|_{L^{3,1}}\int_{0}^1\tau^{-\frac{2}{3}}d\tau\\
&\lesssim &\|\nabla\omega\|_{L^{3,1}}.  \end{eqnarray*}
This achieves the proof.  
\end{proof}

\

The following result will be needed.  
\begin{prop}\label{LZ}
Given $(p,q)\in[1,\infty]^2$ and a smooth divergence free vector field $u.  $    Let $f$ be a smooth solution  of the transport equation 
 $$
\partial_{t}f+u\cdot\nabla f=0,\, f_{|t=0}=f_0.  
$$
Then we have 
$$
\|f(t)\|_{L^{p,q}}\leq\|f_{0}\|_{L^{p,q}}.  
$$
\end{prop}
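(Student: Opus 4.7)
The plan is to exploit the fact that a divergence-free vector field generates a measure-preserving flow, which implies that the transported function $f(t)$ has the same distribution function as $f_0$. Since the Lorentz norm depends only on the nonincreasing rearrangement $f^*$, and $f^*$ depends only on the distribution function, the Lorentz norm is in fact \emph{conserved}, not merely bounded.

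Concretely, I would first introduce the flow map $\Phi_t$ associated with $u$, defined by
\begin{equation*}
\partial_t \Phi_t(x) = u(t,\Phi_t(x)),\qquad \Phi_0(x) = x.
\end{equation*}
Smoothness of $u$ ensures $\Phi_t$ is a $C^1$ diffeomorphism of $\mathbb R^3$. The Jacobian $J_t(x) := \det(D_x \Phi_t(x))$ satisfies the ODE $\partial_t J_t = (\diver u)(t,\Phi_t)\, J_t$, so $\diver u = 0$ gives $J_t \equiv 1$; hence $\Phi_t$ preserves Lebesgue measure. The method of characteristics provides the explicit formula $f(t,x) = f_0(\Phi_t^{-1}(x))$.

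Next I would translate measure preservation into equality of distribution functions. For each $s > 0$,
\begin{equation*}
\mu\bigl(\{x : |f(t,x)| > s\}\bigr) = \mu\bigl(\Phi_t\{x : |f_0(x)| > s\}\bigr) = \mu\bigl(\{x : |f_0(x)| > s\}\bigr),
\end{equation*}
the second equality being the change of variables $y = \Phi_t(x)$ together with $|J_t| = 1$. Since the nonincreasing rearrangement is determined entirely by the distribution function, it follows that $f(t)^* = f_0^*$ as functions on $[0,\infty)$.

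Finally, plugging this identity into the definition of $\|\cdot\|_{L^{p,q}}$ recalled in the paper (both the integral form for $q < \infty$ and the supremum form for $q = \infty$) immediately yields $\|f(t)\|_{L^{p,q}} = \|f_0\|_{L^{p,q}}$, which is stronger than the stated inequality. There is no real obstacle here; the only point that requires mild care is justifying that the flow is globally well-defined and measure-preserving, which is automatic under the stated smoothness assumption on $u$.
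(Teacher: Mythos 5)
Your proof is correct, but it takes a genuinely different route from the paper. The paper's proof is a two-line appeal to the conservation of all Lebesgue norms under transport by a divergence-free field, combined with the real interpolation identity $(L^{p_0},L^{p_1})_{(\theta,q)}=L^{p,q}$ recalled in Section 2 (citing Bergh--L\"ofstr\"om): the solution operator $f_0\mapsto f(t)$ is a contraction on every $L^{p_i}$, hence bounded on the interpolation space. Your argument instead goes through the flow map directly: Liouville's formula gives $J_t\equiv 1$, so $\Phi_t$ preserves Lebesgue measure, the distribution functions of $f(t)$ and $f_0$ coincide, hence $f(t)^*=f_0^*$ and the Lorentz norm --- which by the paper's own definition depends only on the rearrangement --- is exactly conserved. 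What your approach buys is a cleaner and slightly stronger conclusion: you get equality of norms with constant exactly $1$, whereas the interpolation route, taken literally, only yields boundedness up to the equivalence constant between the interpolation norm of $(L^{p_0},L^{p_1})_{(\theta,q)}$ and the standard Lorentz quasi-norm (the paper glosses over this). What the paper's approach buys is brevity and the fact that it recycles machinery already set up in Section 2. The one point you rightly flag --- global well-definedness of the flow --- is handled in the paper's setting since the velocity fields to which the proposition is applied are Lipschitz in space locally in time, exactly as in the proof of Proposition 3.2 of the paper.
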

\begin{proof}
We use the conservation of  the Lebesgue norms combined with a standard interpolation argument, see for instance \cite{BE}.  
\end{proof}
\section{Geometric properties of the vorticity }
\hskip 12pt
In this section we will describe some special geometric properties of axisymmetric flows.   
The following is classical and for the convenience of the reader we give the proof.  
\begin{prop}
\label{mim0}
 Let $u=(u^1,u^2,u^3)$ be a smooth axisymmetric vector field.   
Then we have
\begin{itemize}
\item[i)]  the vector $\omega=\nabla\times u=(\omega^1,\omega^2,\omega^3)$ satsifies $\omega\times e_{\theta}=(0,0,0).  $
 In particular, we have for every $(x_{1},x_{2},z)\in \mathbb R^3$,
$$
\omega^3=0,\quad x_{1}\omega^1(x_{1},x_{2},z)+x_{2}\omega^2(x_{1},x_{2},z)=0\quad\hbox{and}
$$
$$
 \omega^1(x_{1},0,z)=\omega^2(0,x_{2},z)=0.  
$$
\item[ii)] for every $q\geq -1$,  $\Delta_{q}u$ is axisymmetric and
$$
(\Delta_{q}u^1)(0,x_{2},z)=(\Delta_{q}u^2)(x_{1},0,z)=0.  
$$
\end{itemize}
\end{prop}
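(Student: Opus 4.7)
The plan is to work in Cartesian coordinates using the identities $e_r=(x_1/r,x_2/r,0)$, $e_\theta=(-x_2/r,x_1/r,0)$, $e_z=(0,0,1)$, so that an axisymmetric vector field has components
\[
u^1=\frac{x_1}{r}u^r(r,z),\qquad u^2=\frac{x_2}{r}u^r(r,z),\qquad u^3=u^z(r,z).
\]
For part (i) I would simply compute $\omega=\nabla\times u$ directly via the chain rule, using $\partial_j r=x_j/r$ for $j=1,2$. A short calculation shows $\omega^3=\partial_1u^2-\partial_2u^1=0$ because the two cross-derivatives of $u^r x_2/r$ and $u^r x_1/r$ produce identical terms. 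The remaining components come out as
\[
\omega^1=-\frac{x_2}{r}f(r,z),\qquad \omega^2=\frac{x_1}{r}f(r,z),\qquad f(r,z):=\partial_zu^r-\partial_ru^z,
\]
so that $\omega=f\,e_\theta$. Every claim in (i) then follows immediately: $\omega\times e_\theta=0$ is clear; $x_1\omega^1+x_2\omega^2=f\,(x_2\cdot(-x_1)+x_1\cdot x_2)/r=0$; and the boundary vanishing $\omega^1(x_1,0,z)=\omega^2(0,x_2,z)=0$ is read off the prefactors $x_2$ and $x_1$.

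For part (ii) the key ingredient is that $\Delta_q$ commutes with rotations about the $z$-axis. Indeed, the symbol $\varphi(2^{-q}\cdot)$ (respectively $\chi$) is radial in $\mathbb{R}^3$, so the convolution kernel $h_q=2^{3q}h(2^q\cdot)$ satisfies $h_q\circ R_\theta=h_q$ for any rotation $R_\theta$ about the $z$-axis. Writing the convolution and performing the change of variables $y=R_\theta y'$ yields
\[
(\Delta_q u^j)(R_\theta x)=\int h_q(x-y')\,u^j(R_\theta y')\,dy'.
\]
Now axisymmetry of the \emph{vector field} $u$ means $u(R_\theta y')=R_\theta u(y')$, i.e.\ $u^j(R_\theta y')=(R_\theta)^j_k u^k(y')$, from which one gets $(\Delta_q u)(R_\theta x)=R_\theta(\Delta_q u)(x)$. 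Hence $\Delta_q u$ is itself axisymmetric and can be written as $v^r\,e_r+v^z\,e_z$ with $v^r,v^z$ depending only on $(r,z)$. The boundary identities $(\Delta_q u^1)(0,x_2,z)=0$ and $(\Delta_q u^2)(x_1,0,z)=0$ then drop out as in part (i) from the prefactors $x_1/r$ and $x_2/r$.

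The only delicate point I anticipate is bookkeeping the fact that, for a vector field, axisymmetry couples the components through $R_\theta$ rather than being a pointwise scalar invariance; once this is made explicit, the rotational invariance of the radial Fourier multiplier does the rest, and the dependence of the argument on axisymmetry being a \emph{smooth} one is only used to make the Taylor/convolution manipulations valid pointwise.
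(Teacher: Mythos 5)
Your part (i) is correct and is essentially the paper's own computation, transplanted from cylindrical to Cartesian coordinates. The gap is in part (ii). You take ``axisymmetry of the vector field $u$'' to mean the rotational equivariance $u(\mathcal{R}_\eta y)=\mathcal{R}_\eta u(y)$, and you correctly deduce from the radial symmetry of the kernel that $\Delta_q u$ inherits this equivariance. But equivariance under rotations about the $z$-axis is strictly weaker than the notion of axisymmetry used in this paper: it only says that the cylindrical components $(\Delta_q u)^r,(\Delta_q u)^\theta,(\Delta_q u)^z$ are independent of $\theta$; it does not force the swirl $(\Delta_q u)^\theta$ to vanish (the field $e_\theta=(-x_2/r,\,x_1/r,\,0)$ is equivariant yet has unit swirl). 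Hence your conclusion that $\Delta_q u$ ``can be written as $v^r e_r+v^z e_z$'' does not follow from what you proved, and the terminal identities do depend on the missing fact: if $\Delta_q u=v^re_r+v^\theta e_\theta+v^ze_z$ then $(\Delta_q u^1)(0,x_2,z)=-\,\mathrm{sgn}(x_2)\,v^\theta(|x_2|,z)$, which need not vanish. Proving that $\Delta_q$ preserves the \emph{swirl-free} property is precisely the nontrivial content of the paper's proof: it establishes $x_2\Delta_qu^1-x_1\Delta_qu^2=0$ on the Fourier side, where the radial symmetry of $\varphi$ reduces the claim to the two inputs $x_1u^2-x_2u^1=0$ (i.e.\ $u^\theta=0$) and $\omega^3=0$ from part (i). Your proposal never invokes either of these, which is a sign that something is lost.

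The gap can be repaired within your framework rather than by reverting to the Fourier computation: a swirl-free axisymmetric field is equivariant not only under rotations but also under the reflection $\sigma:(x_1,x_2,z)\mapsto(x_1,-x_2,z)$, since $e_r\circ\sigma=\sigma e_r$ while $e_\theta\circ\sigma=-\sigma e_\theta$, so $\sigma$-equivariance is exactly what kills the swirl. The radial kernel $h_q$ is $\sigma$-invariant, so $\Delta_q u$ is $\sigma$-equivariant as well, and combined with your rotation argument this yields $(\Delta_qu)^\theta=0$. As written, however, your proof is missing this step, and it is not a bookkeeping issue but the heart of statement (ii).
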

\begin{proof}
{i)} The first point is easy to show since
the vorticity is given by 
$$
\omega=\left
(
     \begin{array}{c}
        \sin\theta\partial_{r}u^3-\sin\theta\partial_{3}u^r\\
       \cos\theta\partial_{3}u^r-\cos\theta\partial_{r}u^3\\
       0
     \end{array}
  \right)=(\partial_{3}u^r-\partial_{r}u^3)e_{\theta}.  
  $$
  The other properties are a direct consequence from this information.  
  
  \
  
 ii) To prove that the  angular component $(\Delta_{q}u)^\theta$ is zero it suffices to show that
  $
  x_{2}\Delta_{q}u^1-x_{1}\Delta_{q}u^2=0.  
  $
  In Fourier variables it is equivalent to the identity $\partial_{\xi_{2}}(\widehat{\Delta_{q}u^1})-\partial_{\xi_{1}}(\widehat{\Delta_{q}u^2})=0.  $ Since $\varphi$ is radial then we get
  \begin{eqnarray*}
  \partial_{\xi_{2}}(\widehat{\Delta_{q}u^1})-\partial_{\xi_{1}}(\widehat{\Delta_{q}u^2})&=&\varphi(2^{-q}|\xi|)(\partial_{\xi_{2}}\widehat{u^1}-\partial_{\xi_{1}}\widehat{u^2})\\
  &+&2^{-q}|\xi|^{-1}\varphi'(2^{-q}|\xi|)\big(\xi_{2}\widehat{u^1}-\xi_{1}\widehat{u^2}    \big).  
  \end{eqnarray*}
  Since $u^\theta=0$ then $x_{1}u^2-x_{2}u^1=0,$ and consequently the first term  of the right hand side is zero.   Thus we find
  \begin{eqnarray*}
  \partial_{\xi_{2}}(\widehat{\Delta_{q}u^1})-\partial_{\xi_{1}}(\widehat{\Delta_{q}u^2})&=&
  2^{-q}|\xi|^{-1}\varphi'(2^{-q}|\xi|)\big(\xi_{2}\widehat{u^1}-\xi_{1}\widehat{u^2}    \big)\\
  &=&-i2^{-q}|\xi|^{-1}\varphi'(2^{-q}|\xi|)\widehat{\omega^3}\\
  &=&0.  
  \end{eqnarray*}
  It follows that $\Delta_{q}u=(\Delta_{q}u)^re_{r}+\Delta_{q}u^3e_{z}.  $ To end the proof it remains to show that both components do not depend on the angle $\theta.  $ For this purpose it suffices to have 
  $$
  (\Delta_{q}u)^r(\mathcal{R}_{\eta}x)=(\Delta_{q}u)^r(x)\quad\hbox{and}\quad  \Delta_{q}u^3(\mathcal{R}_{\eta}x)=\Delta_{q}u^3(x),
  $$
  where $\mathcal{R}_{\eta}$ is the rotation with angle $\eta$ and axis $(Oz).  $ It is easy to see from the definition  that
  $$
  (\Delta_{q}u)^r(x)=2^{3q}\int_{\RR^3}h(2^q(x-y))u^r(y)\,e_{r}(y)\cdot e_{r}(x)dy.  
  $$
  Since $h$ is radial and the rotation preserves angles and distances then we get
  \begin{eqnarray*}
   (\Delta_{q}u)^r(\mathcal{R}_{\eta}x)&=&2^{3q}\int_{\RR^3}h(2^q\mathcal{R}_{\eta}(x-y))u^r(\mathcal{R}_{\eta}y)\,e_{r}(\mathcal{R}_{\eta}y)\cdot e_{r}(\mathcal{R}_{\eta}x)dy\\
   &=&2^{3q}\int_{\RR^3}h(2^q(x-y))u^r(\mathcal{R}_{\eta}y)\,e_{r}(y)\cdot e_{r}(x)dy\\
   &=&
   (\Delta_{q}u)^r(x).  
  \end{eqnarray*}
  We have used in the last line the fact that $u^r(\mathcal{R}_{\eta}y)=u^r(y)$ which is an easy consequence of the axisymmetry of the flow.   By the same way we obtain $\Delta_{q}u^3(\mathcal{R}_{\eta}x)=\Delta_{q}u^3(x).  $ This achieves the proof.  
\end{proof}

\

 The last  part of this section is dedicated to the  study a vorticity equation type in which no relations between the vector field $u$ and the solution $\Omega$ are supposed.   More precisely, we consider
\begin{equation}
\label{V}
\left\lbrace
\begin{array}{l}
\partial_t \Omega+(u\cdot\nabla)\Omega=\Omega\cdot\nabla u,\\
\diver u=0,\\
{\Omega}_{| t=0}=\Omega_0.  
\end{array}
\right.  
\end{equation}
We will assume that $u$ is axisymmetric and the unknown function  $\Omega=(\Omega^1,\Omega^2,\Omega^3)$ is a vector field.   The following result describes the preservation   of some  initial geometric conditions of the solution $\Omega.  $
\begin{prop}\label{mim}
Let $u$ be  a divergence free and axisymmetric vector field belonging to  $L^1_{\textnormal loc}(\RR_{+}, \textnormal{Lip}(\RR^3))$ and  $\Omega $ the unique global solution  of \eqref{V} with smooth initial data $\Omega_{0}.  $ Then the following properties hold.  
\begin{itemize}
\item[i)] If $\diver \Omega_{0}=0$ then $\diver \Omega(t)=0$, for every $t\in\RR_+.  $
\item[ii)] If $\Omega_{0}\times e_{\theta}=(0,0,0)$ then we have 
$$
\qquad \Omega(t)\times e_{\theta}=(0,0,0),\qquad  \forall\, t\in\RR_+.  
$$  
Consequently, $\Omega^1(t,x_{1},0,z)=\Omega^2(t,0,x_{2},z)=0,$ and 
$$
\partial_t \Omega+(u\cdot\nabla)\Omega= \frac{u^r}{r}\Omega.  
$$
\end{itemize}
\end{prop}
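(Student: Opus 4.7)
For item (i), the plan is a direct divergence computation. Applying $\mathrm{div}$ to both sides of \eqref{V} and using $\mathrm{div}\,u=0$ together with $\partial_i\partial_j=\partial_j\partial_i$, one gets
$$
\mathrm{div}\big((u\cdot\nabla)\Omega\big) = u\cdot\nabla(\mathrm{div}\,\Omega) + \sum_{i,j}(\partial_i u^j)(\partial_j\Omega^i),\qquad \mathrm{div}(\Omega\cdot\nabla u) = \sum_{i,j}(\partial_i\Omega^j)(\partial_j u^i).
$$
After renaming summation indices, the two bilinear cross-terms coincide and cancel, so $\mathrm{div}\,\Omega$ satisfies the free transport equation $\partial_t(\mathrm{div}\,\Omega) + u\cdot\nabla(\mathrm{div}\,\Omega)=0$. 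Since $u$ is Lipschitz in space, its flow is well-defined globally and uniqueness propagates the initial condition $\mathrm{div}\,\Omega_{0}\equiv 0$.

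For item (ii), my strategy is to project \eqref{V} onto the cylindrical frame $(e_r,e_\theta,e_z)$. Because $u$ has no angular component and these vectors depend only on $\theta$, a direct computation (e.g.\ via $x_1/r=\cos\theta$, $x_2/r=\sin\theta$) shows $(u\cdot\nabla)e_r=(u\cdot\nabla)e_\theta=(u\cdot\nabla)e_z=0$. Hence $D_t:=\partial_t+u\cdot\nabla$ commutes with the decomposition $\Omega=\Omega^r e_r + \Omega^\theta e_\theta + \Omega^z e_z$. On the right-hand side, using $\partial_\theta e_r = e_\theta$, the stretching term reads
$$
(\Omega\cdot\nabla)u = (\Omega^r\partial_r u^r + \Omega^z\partial_z u^r)\,e_r + \frac{u^r}{r}\Omega^\theta\,e_\theta + (\Omega^r\partial_r u^z + \Omega^z\partial_z u^z)\,e_z.
$$
Matching components, the $e_\theta$ equation decouples while the $(e_r,e_z)$ components form a \emph{closed} linear homogeneous $2\times 2$ system for $(\Omega^r,\Omega^z)$ transported along the Lipschitz flow of $u$. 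With zero initial data, uniqueness yields $\Omega^r\equiv\Omega^z\equiv 0$, which is exactly $\Omega\times e_\theta=0$. The vanishings $\Omega^1(t,x_1,0,z)=\Omega^2(t,0,x_2,z)=0$ follow from $\Omega^1=-\Omega^\theta x_2/r$ and $\Omega^2=\Omega^\theta x_1/r$, and plugging $\Omega=\Omega^\theta e_\theta$ back into \eqref{V} collapses it to $\partial_t\Omega + u\cdot\nabla\Omega = (u^r/r)\Omega$.

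The one point I expect to need some care is the apparent singularity of the cylindrical frame on the axis $\{r=0\}$, where the manipulations above are not directly meaningful. I would sidestep this by rerunning the same argument in Cartesian variables on the scalar pair $A:=\Omega^3$ and $B:=x_1\Omega^1+x_2\Omega^2$, whose simultaneous vanishing is equivalent to $\Omega\times e_\theta=0$. Using $u^1=u^r x_1/r$, $u^2=u^r x_2/r$, $u^3=u^z$ (with $u^r/r$ extending smoothly to the axis by axisymmetry), a short calculation turns \eqref{V} into a closed linear homogeneous transport system for $(A,B)$ with globally smooth coefficients, so $A_0=B_0=0$ is preserved. Apart from this bookkeeping near the axis, the rest of the argument is routine.
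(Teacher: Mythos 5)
Your proposal is correct and follows essentially the same route as the paper: for (i) you show $\mathrm{div}\,\Omega$ solves the free transport equation (you merely write out the cancellation of cross-terms that the paper asserts), and for (ii) you project onto the cylindrical frame and observe that $(\Omega^r,\Omega^z)$ satisfies a closed homogeneous linear system along the flow, forcing it to vanish — exactly the paper's Gronwall argument on $\|\Omega^r\|_{L^\infty}$ and $\|\Omega^3\|_{L^\infty}$. Your extra Cartesian bookkeeping with $A=\Omega^3$, $B=x_1\Omega^1+x_2\Omega^2$ near the axis $\{r=0\}$ is a legitimate refinement the paper silently skips, though note the coefficient $r\,\partial_z(u^r/r)$ in the $B$-equation is unbounded in $r$, so there you should close the argument by ODE uniqueness along characteristics rather than a global $L^\infty$ Gronwall.
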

\begin{proof}
First, we notice that the existence and uniqueness of global solution can be done in classical way.   Indeed, let $\psi$ denote the flow of the velocity $u,$ that is the vector-valued function satisfying 
$$
\psi(t,x)=x+\int_{0}^tu(\tau,\psi(\tau,x))d\tau.  
$$
Since $u\in L^1_{\textnormal loc}(\RR_{+}, \textnormal{Lip}(\RR^3))
$ then it follows from the ODE theory that the function  $\psi$ is uniquely and globally defined.   Let $\widetilde\Omega(t,x):=\Omega(t,\psi(t,x))$ and $A(t,x)$ the matrix such that $A(t,\psi^{-1}(t,x))=(\partial_{j}u_{i})_{1\leq i,j\leq 3},$ then it is obvious that 
$$
\partial_{t}\widetilde\Omega=A(t,x)\widetilde\Omega.  
$$
From Cauchy-{Lipschitz } theorem this last equation  has a unique global solution, and the system \eqref{V} too.  

\

i) We apply the divergence operator to the equation \eqref{V} leading under the assumption $\diver u=0,$ to
$$
\partial_{t}\diver\Omega+u\cdot\nabla\diver\Omega=0.  
$$
Thus, the quantity $\diver\Omega$ is transported  by the flow and consequently the incompressibility of $\Omega$ remains true for every time.

\

ii) We denote by $(\Omega^r,\Omega^\theta,\Omega^z)$ the coordinates of $\Omega$ in cylindrical basis.   It is obvious that 
$\Omega^r=\Omega\cdot e_r.  $ Recall that in cylindrical coordinates the \mbox{operator $u\cdot\nabla$} has the form
$$
u\cdot\nabla=u^r\partial_{r}+\frac1r u^\theta\partial_{\theta}+u^z\partial_{z}=u^r\partial_{r}+u^z\partial_{z}.  
$$
We have used in the last equality the fact that for axisymmetric flows the angular component is zero.   Hence we get
\begin{eqnarray*}
(u\cdot\nabla\Omega)\cdot e_{r}&=&u^r\partial_{r}\Omega\cdot e_{r}+u^z\partial_{z}\Omega\cdot e_{r}\\
&=&(u^r\partial_{r}+u^z\partial_{z})(\Omega\cdot e_{r})\\
&=&u\cdot\nabla \Omega^r,
\end{eqnarray*}
Where we use $\partial_{r}e_{r}=\partial_{z}e_{r}=0.  $
 Now it remains to compute $(\Omega\cdot\nabla u)\cdot e_{r}.  $
 By a straightforward computations we get,
  \begin{eqnarray*}
 (\Omega\cdot\nabla u)\cdot e_{r}&=&\Omega^{r}\,\partial_{r}u\cdot e_{r}+\frac1r\Omega^{\theta}\,\partial_{\theta}u\cdot e_{r}+\Omega^3\,\partial_{3}u\cdot e_{r}\\
 &=&\Omega^{r}\partial_{r}u^r+\Omega^3\partial_{3}u^r.  
 \end{eqnarray*}
Thus the component $\Omega^r$ obeys to the equation
$$
\partial_{t}\Omega^r+u\cdot \nabla \Omega^r=\Omega^r\partial_{r}u^r+\Omega^3
\partial_{3}u^r.  
$$
From the maximum principle  we deduce
$$
\|\Omega^r(t)\|_{L^\infty}\leq\int_{0}^t\big(\|\Omega^r(\tau)\|_{L^\infty}+\|\Omega^3(\tau)\|_{L^\infty}\big)\|\nabla u(\tau)\|_{L^\infty}d\tau.  
$$

On the other hand the component $\Omega^3$ satisfies the equation
\begin{eqnarray*}
\partial_{t}\Omega^3+u\cdot\nabla\Omega^3&=&\Omega^3\partial_{3}u^3+\Omega^r\partial_{r}u^3.  
\end{eqnarray*}
 This leads to
 $$
 \|\Omega^3(t)\|_{L^\infty}\leq \int_{0}^t\big(\|\Omega^3(\tau)\|_{L^\infty}+\|\Omega^r(\tau)\|_{L^\infty}\big)\|\nabla u(\tau)\|_{L^\infty}d\tau.  
 $$
 Combining these estimates and using Gronwall's inequality we obtain for every $t\in\RR_{+},$ $\Omega^3(t)=\Omega^r(t)=0,$  which is the desired result.  
 
 Under these assumptions the stretching term becomes
 \begin{eqnarray*}
 \Omega\cdot \nabla u=\frac 1r\Omega^\theta\partial_{\theta}(u^r e_{r})
&=&\frac1r u^r\Omega^\theta e_{\theta}
 \\
 &=&\frac1r u^r\Omega,
 \end{eqnarray*}
 which ends the proof of Proposition \ref{mim}.  
 \end{proof}

\section{Proof of Theorem \ref{thm0}}
\hskip 12pt
The proof of Theorem \ref{thm0}  will be done in several steps and it suffices to establish the {\it a priori} estimates.   The hard part of the proof will be the Lipschitz bound of the velocity.  
 \subsection{Some a priori estimates}
We start with the following
\begin{prop} 
\label{u/r}
Let $u$ be an axisymmetric solution of \eqref{E}, then we have for every $t\in\RR_{+},$
\begin{itemize}
\item[i)]
Biot-Savart law:
$$
\big\|{u^r(t)/r}\big\|_{L^\infty}
\lesssim
\big\|{\omega_0/r}\big\|_{L^{3,1}}.  
$$
\item[ii)] Vorticity bound:
$$
\|\omega(t)\|_{L^\infty}\lesssim \|\omega_{0}\|_{L^\infty}e^{Ct\|\omega_0/r\|_{L^{3,1}}}.  
$$
\item[iii)] Velocity bound:
$$
\|u(t)\|_{L^\infty}\lesssim\big(\|u_{0}\|_{L^\infty}+\|\omega_{0}\|_{L^\infty} \big) e^{\exp{Ct\|\omega_0/r\|_{L^{3,1}}}}.  
$$
\end{itemize}
\end{prop}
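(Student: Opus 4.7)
The three bounds are proved in order since (ii) uses (i) and (iii) uses (ii). For (i), the plan is to start from the Biot--Savart representation $u=-\nabla\times(-\Delta)^{-1}\omega$. Since by Proposition \ref{mim0} one has $\omega=\omega^\theta e_\theta$, a direct computation in cylindrical coordinates gives an integral representation of the form
$$
\frac{u^r(x)}{r}=\int_{\mathbb R^3}\mathcal{K}(x,y)\,\frac{\omega^\theta(y)}{\tilde r}\,dy
$$
with a kernel satisfying $|\mathcal{K}(x,y)|\lesssim |x-y|^{-2}$. Because $|x|^{-2}\in L^{3/2,\infty}(\mathbb R^3)$ and H\"older in Lorentz spaces gives $L^{3/2,\infty}\cdot L^{3,1}\hookrightarrow L^\infty$ (using \eqref{imbed0} and \eqref{imbed23}), this yields $\|u^r(t)/r\|_{L^\infty}\lesssim\|\omega(t)/r\|_{L^{3,1}}$. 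The quantity $\omega/r$ is purely transported by $u$ according to \eqref{equation_importante}, so Proposition \ref{LZ} gives $\|\omega(t)/r\|_{L^{3,1}}\le\|\omega_0/r\|_{L^{3,1}}$, which completes (i).

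For (ii), I would apply the maximum principle to the vorticity equation \eqref{tourbillon}, which takes the form of a linear transport equation with source $(u^r/r)\omega$. Composing with the flow $\psi$ of $u$ turns it into a pointwise ODE in time, so Gronwall along trajectories yields
$$
\|\omega(t)\|_{L^\infty}\le\|\omega_0\|_{L^\infty}\exp\Big(\int_0^t\|u^r(\tau)/r\|_{L^\infty}\,d\tau\Big).
$$
Inserting the time-uniform bound from (i) produces the desired exponential estimate.

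For (iii), the plan is again to work along characteristics. From the Euler equation, $u(t,\psi(t,x))=u_0(x)-\int_0^t\nabla\pi(\tau,\psi(\tau,x))\,d\tau$, so the task reduces to estimating $\|\nabla\pi\|_{L^\infty}$. Rewriting $u\cdot\nabla u=\omega\times u+\nabla(|u|^2/2)$ and setting $q=\pi+|u|^2/2$, one has $-\Delta q=\mathrm{div}(\omega\times u)$, so $\nabla q$ is a Calder\'on--Zygmund transform of $\omega\times u$. A Littlewood--Paley decomposition together with the Biot--Savart bound $\|\Delta_qu\|_{L^\infty}\lesssim 2^{-q}\|\Delta_q\omega\|_{L^\infty}$ for $q\ge 0$ reduces the high-frequency part of $\nabla q$ to a bound of the form $C\|\omega\|_{L^\infty}(\|u\|_{L^\infty}+\|\omega\|_{L^\infty})$; the finitely many low frequencies are then estimated by Bernstein together with (i). This should close into a differential inequality of the form
$$
\frac{d}{dt}\|u(t)\|_{L^\infty}\lesssim\|\omega(t)\|_{L^\infty}\bigl(\|u(t)\|_{L^\infty}+\|\omega(t)\|_{L^\infty}\bigr),
$$
and Gronwall combined with the single-exponential bound of (ii) yields the double-exponential estimate, since integrating an exponential produces an exponential and then exponentiating yields a double exponential.

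The main obstacle is clearly (iii): Riesz transforms and more generally Calder\'on--Zygmund operators are not bounded on $L^\infty$, so a direct estimate of $\|\nabla\pi\|_{L^\infty}$ by $\|u\|_{L^\infty}^2$ is not available. The resolution is to separate frequencies and to use the axisymmetric structure already exploited in (i)--(ii) to control the low-frequency contribution, while the Biot--Savart gain of one derivative per dyadic block makes the high-frequency sum convergent in terms of $\|\omega\|_{L^\infty}$.
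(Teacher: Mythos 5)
Your parts (i) and (ii) follow the paper's route. Two small remarks on (i): the kernel bound $|u^r(x)|/r\lesssim\int|x-y|^{-2}\,|\omega(y)|/r'\,dy$ is exactly the content of the Shirota--Yanagisawa lemma that the paper quotes, and is not quite a ``direct computation'' (one must use $|r'-r|\le|x-y|$ to absorb the far-field part of the Biot--Savart kernel); and the Lorentz-space step is O'Neil's \emph{convolution} (Young-type) inequality $L^{3/2,\infty}\star L^{3,1}\to L^\infty$, not H\"older. With the transport of $\omega/r$ and Gronwall for \eqref{tourbillon}, (i) and (ii) are fine.

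Part (iii), however, has a genuine gap. You reduce the problem to an $L^\infty$ bound for $\nabla q$ with $-\Delta q=\mathrm{div}\,(\omega\times u)$ and claim that the high-frequency part of $\nabla q$ is controlled by $\|\omega\|_{L^\infty}\bigl(\|u\|_{L^\infty}+\|\omega\|_{L^\infty}\bigr)$ thanks to the Biot--Savart gain. This does not work: since $\nabla(-\Delta)^{-1}\mathrm{div}$ is a degree-zero multiplier, $\|\dot\Delta_j\nabla q\|_{L^\infty}\lesssim\|\dot\Delta_j(\omega\times u)\|_{L^\infty}$, and in Bony's decomposition of the product $\omega\times u$ the paraproduct $T_u\omega$ contributes $\|u\|_{L^\infty}\|\Delta_j\omega\|_{L^\infty}$, which has no decay in $j$; the sum over $j\ge0$ therefore costs $\|\omega\|_{B^0_{\infty,1}}$, which is precisely the quantity whose control is the main difficulty of the paper (Proposition \ref{ur1}) and is not available at this stage, so the argument would be circular. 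The frequency localization of $u$ alone (where $\|\dot\Delta_ju\|_{L^\infty}\lesssim2^{-j}\|\dot\Delta_j\omega\|_{L^\infty}$ does apply) does not transfer to the product. The low frequencies are also delicate: in the homogeneous decomposition there are infinitely many of them, and $\|\dot\Delta_j\mathbb{P}(\omega\times u)\|_{L^\infty}$ carries no factor $2^j$. The paper's (Serfati's) argument avoids estimating $\nabla\pi$ in $L^\infty$ altogether: split $u=\dot S_{-N}u+\sum_{q\ge-N}\dot\Delta_qu$, bound the high frequencies of $u$ \emph{itself} by $2^N\|\omega\|_{L^\infty}$ via Bernstein, and use the Duhamel formula only for $\dot S_{-N}u$, where writing the nonlinearity as $\mathbb{P}\,\mathrm{div}\,(u\otimes u)$ gives $\|\dot\Delta_j\mathbb{P}\,\mathrm{div}\,(u\otimes u)\|_{L^\infty}\lesssim2^j\|u\|_{L^\infty}^2$, summable over $j<-N$; optimizing in $N$ yields $\|u(t)\|_{L^\infty}^2\lesssim\|u_0\|_{L^\infty}^2+\|\omega(t)\|_{L^\infty}^2+\|\omega(t)\|_{L^\infty}\int_0^t\|u(\tau)\|_{L^\infty}^2\,d\tau$, and Gronwall together with (ii) gives the double exponential. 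You should replace your pressure estimate by this decomposition.
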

\begin{proof}

i) According to  Lemma 1 in \cite{Taira} (see also \cite{rd}) one has, for every $x=(x_1,x_2,x_3)\in\RR^3$,
$$
|u^r(t,x)|
\lesssim \int_{| y-x|\leq r}\frac{|\omega(t,y)|}{|x-y|^{2}}dy
+r \int_{| y-x|\geq r}\frac{|\omega(t,y)|}{|x-y|^{3}}dy,
$$
with $r=({x_{1}^2+x_{2}^2})^{\frac12}$.  
Thus, if we denote $r'=({y_1^2+y_2^2})^{\frac12}$, one can estimate
$$
\begin{aligned}
|u^r(t,x)|
\lesssim
\int_{| y-x|\leq r}{\frac{|\omega(t,y)|}{r'}}{r'\over |x-y|^2}dy
+r \int_{| y-x|\geq r}{|\omega(t,y)|\over r'}{r'-r+r\over |x-y|^3}dy.  
\end{aligned}
$$
But since  $|r'-r|\leq |x-y|$ we have 
$$
|x-y|\leq r\Rightarrow r'\leq 2r.  
$$ 
This yields in particular 
$$
\begin{aligned}
|u^r(t,x)|
&\lesssim
 r\int_{\mathbb R^3}{|\omega(t,y)|\over r'}{1\over |x-y|^2}dy,
\end{aligned}
$$
which can be rewritten as
$$
|u^r/r|\lesssim {1\over |\cdot|^2}\star |\omega/r|.  
$$
 As  ${1\over |\cdot|^2}\in L^{{3\over 2},\infty}(\mathbb R^3)$, 
 then Young inequalities on  $L^{q,p}$ spaces\footnote{The convolution 
$L^{p,q}\star L^{p',q'} \longrightarrow L^{\infty}$
is a bilinear  continuous operator, (see \cite{Neil}, page 141 for more details).  } 
  imply
$$
\big\|{u^r/ r}\big\|_{L^\infty}
\lesssim \big\|{\omega/ r}\big\|_{L^{3,1}}.  
$$
Since  $\omega/ r$  satisfies \eqref{equation_importante} then applying Proposition \ref{LZ} gives
\begin{equation*}
\label{yazid}
\big\|{u^r/ r}\big\|_{L^\infty}
\lesssim \big\|{\omega_{0}/ r}\big\|_{L^{3,1}}.  
\end{equation*}

\

ii) From the maximum principle applied to  \eqref{tourbillon}  one has
$$
\|\omega(t)\|_{L^\infty}
\leq
\|\omega_0\|_{L^\infty}
+\int_0^t\big\|{u^r(\tau)/ r}\big\|_{L^\infty}\|\omega(\tau)\|_{L^\infty}d\tau.  
$$
Using  Gronwall's lemma and i) gives the desired result.  

\

iii)
To estimate $L^\infty$ norm of the velocity we use the argument of Serfati \cite{Serfaty},
$$
\begin{aligned}
\|u(t)\|_{L^\infty}
&\leq \|\dot S_{-N}u\|_{L^\infty}+
\sum_{q\geq - N}\|\dot \Delta_qu\|_{L^\infty},
\end{aligned}
$$
where $N$ is an arbitrary positive integer that will be fixed later.  
By Bernstein inequality we infer\footnote{ We recall the classical fact $\|\dot\Delta_q u\|_{L^p}\approx 2^{-q}\|\dot\Delta_q \omega\|_{L^p}$ uniformly in $q$, for every $p\in [1,+\infty]$.  }
 $$
\sum_{q\geq -N}\|\dot\Delta_q u\|_{L^\infty}
\lesssim 
2^{N}\|\omega\|_{L^\infty}.  
$$
On the other hand using the integral equation  we get
$$
\begin{aligned}
\|\dot S_{-N} u\|_{L^\infty}
&\leq 
\|\dot S_{-N}u_0\|_{L^\infty}
+
\int_0^t\|\dot S_{-N}\big(\mathbb{P}(u\cdot\nabla)u\big)\|_{L^\infty}d\tau
\\&
\lesssim
\| u_0\|_{L^\infty}
+\sum_{j<-N}\int_0^t\|\dot \Delta_j\big(\mathbb{P}(u\cdot\nabla)u\big)\|_{L^\infty}d\tau,
\end{aligned}
$$
where $\mathbb{P}$ denotes the Leray's projector over divergence free vector fields.    Since $\dot \Delta_j \mathbb{P}$ maps $L^p$ to itself uniformly\footnote{We stress the fact that this is true for every $p\in [1,+\infty]$ since  $\mathbb{P}$ is a Fourier multiplier of degree zero so $\dot \Delta_j \mathbb{P}= \Psi(2^{-j}D)$, where $\Psi\in C^\infty_0$.  } in $j\in\mathbb Z,$ we get  
$$
\|\dot S_{-N} u\|_{L^\infty}
\lesssim
\| u_0\|_{L^\infty}
+2^{-N}\int_0^t\|u(\tau)\|_{L^\infty}^2d\tau.  
$$
Hence we obtain
$$
\|u(t)\|_{L^\infty}\lesssim\|u_{0}\|_{L^\infty}+2^N\|\omega(t)\|_{L^\infty}+2^{-N}\int_{0}^t\|u(\tau)\|_{L^\infty}^2d\tau.  
$$
If we choose $N$ such that   
$$
2^{2N}\approx 1+\|\omega(t)\|_{L^\infty}^{-1}\int_{0}^t\|u(\tau)\|_{L^\infty}^2d\tau,
$$
then we obtain
$$
\|u(t)\|_{L^\infty}^2
\lesssim
\| u_0\|_{L^\infty}^2+\|\omega(t)\|_{L^\infty}^2
+\|\omega(t)\|_{L^\infty}
\int_0^t\|u(\tau)\|_{L^\infty}^2d\tau.  
$$
Thus Gronwall's  lemma and the $L^\infty$ bound of the vorticity yield
\begin{equation*}
\begin{aligned}
\label{u_infty}
\|u(t)\|_{L^\infty}
&\lesssim
\big(\|u_0\|_{L^\infty}+\|\omega\|_{L^\infty_{t}L^\infty}\big)e^{Ct\|\omega\|_{L^\infty_{t}L^\infty}}
\\&
\lesssim\big(\|u_0\|_{L^\infty}+\|\omega_{0}\|_{L^\infty}\big)e^{\exp{Ct\|\frac{\omega_0}{r}\|_{L^{3,1}}}}.  
\end{aligned}
\end{equation*}
\end{proof}
\subsection{Lipschitz estimate of the velocity}
The Lipschitz estimate of the velocity is heavily related to
the following interpolation result which is the heart of this work:
\begin{prop}
\label{Thm89}
There exists a decomposition $(\tilde{\omega}_{q})_{q\geq-1}$ of the vorticity $\omega$ such that
\begin{itemize}
\item[i)] For every $t\in\RR_{+},\,\omega(t,x)=\sum_{q\geq-1}\tilde\omega_{q}(t,x).  $
\item[ii)] For every $t\in\RR_{+}, \diver \tilde\omega_{q}(t,x)=0.  $
\item[iii)] For every $q\geq-1$ we have $\|\tilde\omega_{q}(t)\|_{L^\infty}\leq\|\Delta_{q}\omega_0\|_{L^\infty}e^{Ct\|\omega_0/r\|_{L^{3,1}}}.  $
\item[iv)] For all $j,q\geq-1$ we have
$$
\|\Delta_{j}\tilde\omega_{q}(t)\|_{L^\infty}\leq C2^{-|j-q|}e^{CU(t)}\|\Delta_{q}\omega_0\|_{L^\infty},$$
with $U(t):=\|u\|_{L^1_{t}B_{\infty,1}^1}$ and $C$ an absolute constant.  
\end{itemize}
\end{prop}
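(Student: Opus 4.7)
My plan is to define $\tilde\omega_q$ by the natural linearization: let $\tilde\omega_q$ be the unique global solution of \eqref{V} driven by the already-fixed velocity $u$, starting from the initial datum $\Delta_q\omega_0$. Existence and uniqueness come from the Cauchy--Lipschitz argument that opens the proof of Proposition \ref{mim}. Part (i) follows by linearity of \eqref{V} combined with uniqueness: $\sum_{q\geq -1}\tilde\omega_q$ solves \eqref{V} with datum $\omega_0=\sum_q\Delta_q\omega_0$, hence equals $\omega$. For (ii), I invoke Proposition \ref{mim}. Since $u_0$ is axisymmetric, Proposition \ref{mim0}~ii) yields that $\Delta_q u_0$ is also axisymmetric, so $\Delta_q\omega_0=\nabla\times\Delta_q u_0$ is divergence-free and by Proposition \ref{mim0}~i) applied to $\Delta_q u_0$ satisfies $\Delta_q\omega_0\times e_\theta=0$. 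Proposition \ref{mim} then gives $\diver\tilde\omega_q(t)=0$, which is (ii), and reduces the equation to
\begin{equation*}
\partial_t\tilde\omega_q+u\cdot\nabla\tilde\omega_q=\tfrac{u^r}{r}\tilde\omega_q.
\end{equation*}
An $L^\infty$ maximum principle on this equation combined with $\|u^r/r\|_{L^\infty}\lesssim\|\omega_0/r\|_{L^{3,1}}$ from Proposition \ref{u/r}~i) yields (iii).

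Part (iv) is the delicate point. Applying $\Delta_j$ to the reduced equation and using the $L^\infty$ maximum principle gives
\begin{equation*}
\|\Delta_j\tilde\omega_q(t)\|_{L^\infty}\leq\|\Delta_j\Delta_q\omega_0\|_{L^\infty}+\int_0^t\Bigl(\|[u\cdot\nabla,\Delta_j]\tilde\omega_q\|_{L^\infty}+\|\Delta_j(\tfrac{u^r}{r}\tilde\omega_q)\|_{L^\infty}\Bigr)d\tau.
\end{equation*}
The initial-datum term is non-trivial only for $|j-q|\leq 1$ by almost-orthogonality. For the time integrals I would decompose both the commutator and the paraproduct term à la Bony, relying on the standard bound $\|[u\cdot\nabla,\Delta_j]v\|_{L^\infty}\lesssim\|\nabla u\|_{L^\infty}\|v\|_{L^\infty}$ (which produces the exponential $e^{CU(t)}$ after Gronwall), together with the observation that $\|u^r/r\|_{L^\infty}\leq\|\nabla u\|_{L^\infty}$ (since $u^r$ vanishes on the symmetry axis), so the paraproduct piece is also controlled by $U(t)$. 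The gain $2^{-|j-q|}$ would be extracted self-consistently: assuming (iv) holds up to time $t$, one plugs it back into the commutator and paraproduct and closes by Gronwall.

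The main obstacle is precisely that $\tilde\omega_q$ is \emph{not} spectrally localized at $2^q$: the flow of $u$ spreads its frequencies, so the clean almost-orthogonality $\Delta_j\Delta_q\approx 0$ is unavailable past $t=0$ and the factor $2^{-|j-q|}$ must emerge from the commutator structure itself. A perhaps cleaner alternative route is Lagrangian: writing $\tilde\omega_q(t,\psi(t,x))=\Delta_q\omega_0(x)M(t,x)$ with $\|M\|_{L^\infty}\leq e^{CU(t)}$ (obtained by integrating $\partial_t M=(u^r/r)(\tau,\psi(\tau,x))M$ along characteristics and using $\|u^r/r\|_{L^\infty}\leq\|\nabla u\|_{L^\infty}$), the incompressible change of variables gives
\begin{equation*}
\Delta_j\tilde\omega_q(t,y)=\int K_j\bigl(y-\psi(t,x)\bigr)\,\Delta_q\omega_0(x)\,M(t,x)\,dx,
\end{equation*}
which can be estimated in two ways: for $j\geq q$, the cancellation $\int K_j=0$ combined with the Bernstein gain $\|\nabla\Delta_q\omega_0\|_{L^\infty}\lesssim 2^q\|\Delta_q\omega_0\|_{L^\infty}$ and $\|\nabla\psi^{-1}\|_{L^\infty}\leq e^{U(t)}$ yields the factor $2^{-(j-q)}e^{CU(t)}$; for $j\leq q$, integration by parts against the spectral support of $\Delta_q\omega_0$ produces $2^{-(q-j)}e^{CU(t)}$. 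Both routes pivot on the same delicate interplay between the Lipschitz norm of the flow and the spectral concentration of the initial datum.
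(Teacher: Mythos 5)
Your construction of $\tilde\omega_q$ and your arguments for (i)--(iii) coincide with the paper's: same linearized system \eqref{td1}, same appeal to Propositions \ref{mim0} and \ref{mim}, same maximum principle plus Proposition \ref{u/r}. The gap is in (iv), and neither of your two routes closes. For the commutator route you already identify the obstruction correctly: $\|[u\cdot\nabla,\Delta_j]\tilde\omega_q\|_{L^\infty}\lesssim\|\nabla u\|_{L^\infty}\|\tilde\omega_q\|_{L^\infty}$ carries no factor $2^{-|j-q|}$, and the proposed ``self-consistent'' reinsertion of (iv) is not a Gronwall argument, because the commutator couples $\Delta_j\tilde\omega_q$ to \emph{all} frequencies of $\tilde\omega_q$, so the resulting system of integral inequalities over $j$ does not close. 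The Lagrangian route is more interesting but also fails: the representation $\tilde\omega_q(t,\psi(t,x))=\Delta_q\omega_0(x)M(t,x)$ is correct, yet both of your estimates --- the mean-zero cancellation of $K_j$ for $j\geq q$ and the integration by parts for $j\leq q$ --- require differentiating the integrand, hence $\nabla M$ in $L^\infty$. Since $M=\exp\bigl(\int_0^t(u^r/r)(\tau,\psi(\tau,x))\,d\tau\bigr)$, this needs $\int_0^t\|\nabla(u^r/r)\|_{L^\infty}d\tau$, i.e.\ second derivatives of $u$ near the axis, which is \emph{not} controlled by $U(t)=\|u\|_{L^1_tB^1_{\infty,1}}$ (one only gets $\|u^r/r\|_{L^\infty}\leq\|\nabla u\|_{L^\infty}$ from the vanishing of $u^r$ on the axis; its gradient is one derivative worse). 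With $M$ merely bounded, no frequency gain on the product $\Delta_q\omega_0\,M$ is available.

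What the paper actually does is split (iv) into the two one-sided bounds \eqref{bs10-2} and \eqref{bs10-1}. The bound $2^{j-q}$ is obtained by propagating the $B^{-1}_{\infty,\infty}$ norm of $\tilde\omega_q$ through the \emph{full} stretching equation \eqref{td1}, using $\diver\tilde\omega_q=0$ to write the remainder in divergence form, so that $\|\tilde\omega_q\cdot\nabla u\|_{B^{-1}_{\infty,\infty}}\lesssim\|u\|_{B^1_{\infty,1}}\|\tilde\omega_q\|_{B^{-1}_{\infty,\infty}}$ and Gronwall applies. The hard bound $2^{q-j}$ is where the axisymmetry enters in an essential way that your proposal does not capture: writing $u^r/r=u^2/x_2$, each Cartesian component solves $\partial_t\tilde\omega_q^1+u\cdot\nabla\tilde\omega_q^1=u^2\,\tilde\omega_q^1/x_2$, and one propagates the $B^1_{\infty,1}$ norm. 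The dangerous paraproduct $T_{u^2}(\tilde\omega_q^1/x_2)$ is handled by combining the vanishing of $S_{j-1}u^2$ on $\{x_2=0\}$ (Taylor), a commutator $[\Delta_j,1/x_2]$ whose kernel $x_2h(x)$ stays spectrally localized, the fact that $\tilde\omega_q^1/x_2$ is \emph{purely transported} (hence controlled in $B^0_{\infty,1}$ by its data), and the anisotropic dilation estimate of Proposition \ref{dilatation} to get $\|\Delta_q\omega_0^1/x_2\|_{B^0_{\infty,1}}\lesssim 2^q\|\Delta_q\omega_0\|_{L^\infty}$. None of these ingredients appears in your sketch, and without them the factor $2^{q-j}$ does not emerge.
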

\begin{proof}
We will use for this purpose a new approach similar to \cite{ST}.   
Let $q\geq -1$ and  denote by $\tilde{\omega}_{q}$  the unique global vector-valued solution of the problem
\begin{equation}\label{td1}
\left\lbrace
\begin{array}{l}
\partial_t \tilde{\omega}_{q}+(u\cdot\nabla)\tilde{\omega}_{q}=\tilde\omega_{q}\cdot\nabla u\\
{\tilde\omega_q}{_{|t=0}}=\Delta_{q}\omega_{0}.  
\end{array}
\right.  
\end{equation} 
Since $\diver \Delta_{q}\omega_{0}=0,$ then it follows from Proposition \ref{mim} that 
$\diver\tilde\omega_{q}(t,x)=0.  $ On the other hand we have
by linearity and uniqueness
\begin{equation}\label{LU}
\omega(t,x)=\sum_{q\geq-1}\tilde{\omega}_{q}(t,x).  
\end{equation}
We will now rewrite the equation (\ref{td1}) under a suitable form.   

As $\Delta_{q}\omega_{0}=\hbox{curl }\Delta_{q}u_{0}$ and $\Delta_{q}u_{0}$ is axisymmetric then we obtain from Proposition \ref{mim0} that $(\Delta_{q}\omega_{0})\times e_{\theta}=(0,0,0).  $ This leads in view of   Proposition \ref{mim} to $\ww_{q}(t)\times e_{\theta}=(0,0,0)$ and 
\begin{equation}\label{td2}
\left\lbrace
\begin{array}{l}
\partial_t \tilde{\omega}_{q}+(u\cdot\nabla)\tilde{\omega}_{q}=\frac{u^r}{r}\tilde\omega_{q}\\
{\tilde\omega_q}{_{|t=0}}=\Delta_{q}\omega_{0}.  
\end{array}
\right.  
\end{equation}
Applying the maximum principle and using Proposition \ref{u/r} we obtain
\begin{eqnarray}
\nonumber
\|\tilde \omega_{q}(t)\|_{L^\infty}
&\leq& \|\Delta_{q}\omega_{0}\|_{L^\infty}e^{\int_0^t 
\|{u^r(\tau)/r}\|_{L^\infty}d\tau}
\\
\label{ben0}
&\leq&\|\Delta_{q}\omega_{0}\|_{L^\infty}e^{Ct\|\frac{\omega_{0}}{r}\|_{L^{3,1}}}.  
\end{eqnarray}
This concludes the proof of i-iii) of the proposition.  

\

Let us now move to the proof of  iv) which is the main property  of the decomposition above.  
Remark first that the desired estimate is equivalent to 
\begin{eqnarray}
\label{bs10-2}
\|\Delta_{j}\tilde\omega_{q}(t)\|_{L^\infty}\leq C2^{j-q}e^{CU(t)}\|\Delta_{q}\omega_0\|_{L^\infty}
\end{eqnarray}
and
\begin{eqnarray}
\label{bs10-1}
 \|\Delta_{j}\tilde\omega_{q}(t)\|_{L^\infty}\leq C2^{q-j}e^{CU(t)}\|\Delta_{q}\omega_0\|_{L^\infty} .  
\end{eqnarray}

\

$\bullet $ {\sl Proof of  \eqref{bs10-2}.  }
 Applying  Proposition \ref{Lems2} of the appendix  to  \eqref{td1}  
\begin{multline}
\label{bs10}
 e^{-CU(t)}\|\ww_{q}(t)\|_{B_{\infty,\infty}^{-1}}\lesssim\|\Delta_{q}\omega_0\|_{B_{\infty,\infty}^{-1}}+
\\+\int_{0}^te^{-CU(\tau)}\|\ww_{q}\cdot\nabla u(\tau)\|_{B_{\infty,\infty}^{-1}}d\tau.  
\end{multline}
To  estimate the integral term we write in view of Bony's decomposition
\begin{eqnarray*}
\|\ww_{q}\cdot\nabla u\|_{B_{\infty,\infty}^{-1}}&\leq&\|T_{\ww_{q}}\cdot\nabla u\|_{B_{\infty,\infty}^{-1}}+\|T_{\nabla u}\cdot\ww_{q}\|_{B_{\infty,\infty}^{-1}}\\
&+&\|\mathcal{R}\big({\ww_{q}}\cdot\nabla, u\big)\|_{B_{\infty,\infty}^{-1}}\\
&\lesssim&\|\nabla u\|_{L^\infty}\|\ww_{q}\|_{B_{\infty,\infty}^{-1}}+\|\mathcal{R}\big({\ww_{q}}\cdot\nabla, u\big)\|_{B_{\infty,\infty}^{-1}}.  
\end{eqnarray*}
Since $\diver\ww_{q}=0,$ then the remainder term can be treated as follows
\begin{eqnarray*}
\|\mathcal{R}\big({\ww_{q}}\cdot\nabla, u\big)\|_{B_{\infty,\infty}^{-1}}&=&\|\diver\mathcal{R}\big({\ww_{q}}\otimes, u\big)\|_{B_{\infty,\infty}^{-1}}\\
&\lesssim&\sup_{k}\sum_{j\geq k-3
}\|\Delta_{j}\ww_{q}\|_{L^\infty}\|\widetilde\Delta_{j}u\|_{L^\infty}\\
&\lesssim& \|\ww_{q}\|_{B_{\infty,\infty}^{-1}}\|u\|_{B_{\infty,1}^1}.  
\end{eqnarray*}
Il follows that
$$
\|\ww_{q}\cdot\nabla u\|_{B_{\infty,\infty}^{-1}}\lesssim\|u\|_{B_{\infty,1}^1}\|\ww_{q}\|_{B_{\infty,\infty}^{-1}}
$$
Inserting this estimate into (\ref{bs10}) we get
\begin{eqnarray*}
e^{-CU(t)}\|\ww_{q}(t)\|_{B_{\infty,\infty}^{-1}}&\lesssim&\|\Delta_{q}\omega_0\|_{B_{\infty,\infty}^{-1}}\\
&+&\int_{0}^t\|u(\tau)\|_{B_{\infty,1}^1}e^{-CU(\tau)}\|\ww_{q}(\tau)\|_{B_{\infty,\infty}^{-1}}d\tau.                        
\end{eqnarray*}
Hence we obtain by Gronwall's inequality
\begin{eqnarray*}
\|\ww_{q}(t)\|_{B_{\infty,\infty}^{-1}}&\leq& C\|\Delta_{q}\omega_0\|_{B_{\infty,\infty}^{-1}}e^{CU(t)}\\
&\leq& C2^{-q}\|\Delta_{q}\omega_0\|_{L^\infty}e^{CU(t)}.  
\end{eqnarray*}
This gives by definition
$$
\|\Delta_{j}\ww_{q}(t)\|_{L^\infty}\leq C2^{j-q}\|\Delta_{q}\omega_0\|_{L^\infty}e^{CU(t)}.  
$$

\

$\bullet $ {\sl Proof of  \eqref{bs10-1}.  }  Since $w_{q}(t)\times e_{\theta}=(0,0,0)$  the solution $\ww_{q}$ has two components in the cartesian basis, $\tilde{\omega}_{q}=(\tilde{\omega}_{q}^1,\tilde{\omega}_{q}^2,0)$.   The analysis will be exactly the same for both components, so we will deal only with the first one.

From the identity ${{u^r}\over{ r}}={{u^1}\over{x_{1}}}={{u^2}\over{ x_{2}}},$ which is an easy consequence of $u^\theta=0$, it is plain that 
the  \mbox{functions $\tilde{\omega}_{q}^1$} is solution of 
$$
\left\lbrace
\begin{array}{l}
\partial_t \tilde{\omega}_{q}^1+(u\cdot\nabla)\tilde{\omega}_{q}^1=u^2{{\tilde{\omega}_q^1}\over x_{2}},\\
{\tilde\omega_q}^1{_{|t=0}}=\Delta_{q}\omega_{0}^1.  
\end{array}
\right.  
$$
Unfortunately, we are not able to close the estimate in Besov space $B_{\infty,\infty}^1$ due to the invalidity of a commutator estimate in  Proposition \ref{Lems2} for the  limiting case $s=1$.   Nevertherless we will be able to do it for Besov space $B_{\infty,1}^{1}.  $

\

 In view of  \mbox{Proposition \ref{Lems2}} in the appendix we have
\begin{equation}
\label{inv}
e^{-CU(t)}\|\tilde{\omega}_{q}^1(t)\|_{B_{\infty,1}^1}\lesssim \|\tilde{\omega}_{q}^1(0)\|_{B_{\infty,1}^1}+\int_{0}^te^{-CU(\tau)}\Big\|u^2\frac{\ww_{q}^1}{x_{2}}(\tau)\Big\|_{B_{\infty,1}^1}d\tau.  
\end{equation}
To estimate the integral  term we write from Bony's decomposition,
$$
\Big\|u^2\frac{\ww_{q}^1}{x_{2}}\Big\|_{B_{\infty,1}^1}\leq\Big\|T_{\frac{\ww_{q}^1}{x_{2}}}u^2\Big\|_{B_{\infty,1}^1}+
\Big\|T_{u^2}\frac{\ww_{q}^1}{x_{2}}\Big\|_{B_{\infty,1}^1}+
\Big\|\mathcal{R}(u^2,{\ww_{q}^1}/{x_{2}})\Big\|_{B_{\infty,1}^1}.  
$$
To estimate the first paraproduct we write by definition,
\begin{eqnarray}\label{bs1}
\nonumber\Big\|T_{\frac{\ww_{q}^1}{x_{2}}}u^2\Big\|_{B_{\infty,1}^1}&\lesssim& \sum_{j}2^j\|S_{j-1}(\ww_{q}^1/x_{2})\|_{L^\infty}\|\Delta_{j}u^2\|_{L^\infty}\\
&\lesssim&\|u\|_{B_{\infty,1}^1}\|\ww_{q}^1/x_{2}\|_{L^\infty}.  
\end{eqnarray}
The  remainder term is estimated as follows,
\begin{eqnarray}\label{bs2}
\nonumber
\|\mathcal{R}(u^2,\ww_{q}^1/x_{2})\|_{B_{\infty,1}^1}&\lesssim &\sum_{k\geq j-3}2^j\|\Delta_{k}u^2\|_{L^\infty}\|\widetilde\Delta_{k}(\ww_{q}^1/x_{2})\|_{L^\infty}
\\
&\lesssim &\|u\|_{B_{\infty,1}^1}\|\ww_{q}^1/x_{2}\|_{L^\infty}.  
\end{eqnarray}
The treatment of the second term is more subtle and needs  the axisymmetry of the vector field  $u$.   By definition we have
$$
\Big\|T_{u^2}\frac{\ww_{q}^1}{x_{2}}\Big\|_{B_{\infty,1}^1}\lesssim\sum_{j\in\NN}2^j\|S_{j-1}u^2(x)\Delta_{j}(\ww_{q}^1(x)/x_{2})\|_{L^\infty}.  
$$
Now we write
\begin{eqnarray*}
S_{j-1}u^2(x)\Delta_{j}(\ww_{q}^1(x)/x_{2})&=&S_{j-1}u^2(x){\Delta_{j}\ww_{q}^1(x)}/{x_{2}}+S_{j-1}u^2(x)\Big[\Delta_{j}, \frac1x_{2}\Big]\ww_{q}^1\\
&:=&{\hbox{I}}_{j}(x)+{\hbox{II}}_{j}(x).  
\end{eqnarray*}
Since $S_{j-1}u$ est axisymmetric then it follows  from Proposition \ref{mim0} that $S_{j-1}u^2(x_{1},0,z)=0.  $ Thus from Taylor formula we get
$$
\|\hbox{I}_{j}\|_{L^\infty}\lesssim \|\nabla u\|_{L^\infty}\|\Delta_{j}\ww_q^1\|_{L^\infty}.  
$$
This yields
\begin{equation}\label{bs3}
\sum_{j}2^{j}\|\hbox{I}_{j}\|_{L^\infty}\lesssim \|\nabla u\|_{L^\infty}\|\ww_{q}^1\|_{B_{\infty,1}^1}.  
\end{equation}
For the commutator term $\hbox{II}_{j}$ we write by definition
\begin{eqnarray*}
\hbox{II}_{j}(x)&=&S_{j-1}u^2(x)/x_{2}\,\, 2^{3j}\int_{\RR^3}h(2^j(x-y))(x_{2}-y_{2})\ww_{q}^1(y)/y_{2}dy \\
 &=&2^{-j}(S_{j-1}u^2(x)/x_{2})\, 2^{3j}\tilde{h}(2^j\cdot)\star(\ww_{q}^1/y_{2})(x),
\end{eqnarray*}
with $\tilde{h}(x)=x_{2}h(x).  $ Now we claim that for every $f\in\mathcal{S}'$ we have
$$
2^{3j}\tilde{h}(2^j\cdot)\star f=\sum_{|j-k|\leq 1}2^{3j}\tilde{h}(2^j\cdot)\star\Delta_{k}f.  
$$
Indeed, we have  $\widehat{\tilde h}(\xi)=i\partial_{\xi_{2}}\widehat{h}(\xi)=i\partial_{\xi_{2}}\varphi(\xi).  $ It follows that $supp\, \widehat{\tilde h}\subset supp\, \varphi.  $ So we get $2^{3j}\tilde{h}(2^j\cdot)\star\Delta_{k}f=0,$ for $|j-k|\geq 2.  $
This leads to
\begin{eqnarray}\label{bs4}
\nonumber\sum_{j\in\NN}2^j\|\hbox{II}_{j}\|_{L^\infty}&\lesssim&\sum_{|j-k|\leq 1}\|S_{j-1}u^2/x_{2}\|_{L^\infty}\|\Delta_{k}(\ww_{q}^1/x_{2})\|_{L^\infty}\\
&\lesssim& \|\nabla u\|_{L^\infty}\|\ww_{q}^1/x_{2}\|_{B_{\infty,1}^0}.  
\end{eqnarray}
Using (\ref{bs3}) et (\ref{bs4}) one obtains
\begin{equation}
\label{bs5}
\big\|T_{u^2}\ww_{q}^1/x_{2}\big\|_{B_{\infty,1}^1}\lesssim\|\nabla u\|_{L^\infty}\big(\|\ww_{q}^1\|_{B_{\infty,1}^1}+\|\ww_{q}^1/x_{2}\|_{B_{\infty,1}^0}\big).  
\end{equation}
Putting together (\ref{bs1}) (\ref{bs2}) and (\ref{bs5}) we find
$$
\Big\|u^2\frac{\ww_{q}^1}{x_{2}}  \Big\|_{B_{\infty,1}^1}\lesssim \|u\|_{B_{\infty,1}^1}\big(\|\ww_{q}^1\|_{B_{\infty,1}^1}+ \|\ww_{q}^1/x_{2}\|_{B_{\infty,1}^0}\big) 
$$
Therefore we get from (\ref{inv}),
\begin{eqnarray*}
\nonumber e^{-CU(t)}\|\tilde{\omega}_{q}^1(t)\|_{B_{\infty,1}^1}&\lesssim&\|\tilde{\omega}_{q}^1(0)\|_{B_{\infty,1}^1}+
\int_{0}^te^{-CU(\tau)}\|\ww_{q}^1(\tau)\|_{B_{\infty,1}^1}\|u(\tau)\|_{B_{\infty,1}^1}
d\tau\\
&+&\int_{0}^te^{-CU(\tau)}\|u(\tau)\|_{B_{\infty,1}^1}\|{\ww_{q}^1(\tau)}/{x_{2}}\|_{B_{\infty,1}^0}d\tau.  
\end{eqnarray*}
According to Gronwall's inequality we have
\begin{equation}\label{bs6}
\|\tilde{\omega}_{q}^1(t)\|_{B_{\infty,1}^1}\lesssim e^{CU(t)}\Big(\|\tilde{\omega}_{q}^1(0)\|_{B_{\infty,1}^1}+\|{\ww_{q}^1}/{x_{2}}\|_{L^\infty_{t}B_{\infty,1}^0}\Big).  
\end{equation}
Let us now estimate  $\|{\ww_{q}^1}/{x_{2}}\|_{L^\infty_{t}B_{\infty,1}^0}$.   
It is easy to check that  $\tilde{\omega}_{q}^1/x_{2}$ is advected by the flow, that is
$$
\left\lbrace
\begin{array}{l}
(\partial_t +u\cdot\nabla){\tilde{\omega}_{q}^1\over x_{2}}=0\\
{\tilde{\omega}_{q}^1\over x_{2}}{_{|t=0}}={\Delta_{q}\omega_{0}^1\over x_{2}}\cdot
\end{array}
\right.  
$$
Thus we  deduce from  Proposition \ref{Lems2}, 
\begin{equation}\label{rs1}
\big\|{\tilde{\omega}_{q}^1(t)/ x_{2}}\big\|_{B_{\infty,1}^0}
\leq
\big\|{\Delta_q\omega_0^1/x_{2}}\big\|_{B_{\infty,1}^0}e^{CU(t)}.  
\end{equation}

At this stage we need the following  lemma.  
\begin{lemm}Under the assumptions on $u_0$, one has 
 \begin{eqnarray}\label{rs2}
\Big\|{\Delta_q\omega_0^1/x_{2}}\Big\|_{B_{\infty,1}^0}
\lesssim
2^{q}\|\Delta_q\omega_0\|_{L^\infty}.  
\end{eqnarray}
\end{lemm}
\begin{proof}
Since $u_0$ is axisymmetric then according to Proposition \ref{mim0}, $\Delta_q u_0$  is  too.   Consequently $\Delta_q\omega_0$ is the curl of an  axisymmetric vector field and then  by Proposition \ref{mim0} and Taylor expansion
$$
\Delta_q\omega_0^1(x_{1},x_{2},z)=x_{2}\int^1_{0}(\partial_{x_{2}} \Delta_q\omega_0^1)(x_{1},\tau x_{2},z)d\tau.  
$$
 Hence we get in view of Proposition \ref{dilatation}
 \begin{eqnarray*}
\nonumber\big\|{\Delta_q\omega_0^1/x_{2}}\big\|_{B_{\infty,1}^0}
&\leq& \int^1_{0} \Vert( \partial_{x_{2}}\Delta_q\omega_0^1)(\cdot,\tau\cdot,\cdot)\Vert_{B_{\infty,1}^0}d\tau
\\
\nonumber&\lesssim&\|\partial_{x_{2}}\Delta_q\omega_0^1\|_{B_{\infty,1}^0}\int_{0}^1(1-\log\tau)d\tau
\\
&\lesssim& 
2^{q}\| \Delta_q\omega_0^1\|_{L^\infty},
\end{eqnarray*}
as claimed.  
\end{proof}
Coming back to the proof of \eqref{bs10-1}.  
We put together  (\ref{bs6}), (\ref{rs1}) and (\ref{rs2}) to get 
$$
\|\ww_{q}^1(t)\|_{B_{\infty,1}^1}\leq C2^q\|\Delta_{q}\omega_{0}\|_{L^\infty}e^{CU(t)}.  
$$
This can be written as
\begin{equation}\label{bs8}
\|\Delta_{j}\ww_{q}^1(t)\|_{L^\infty}\leq C2^{q-j}e^{CU(t)}\|\Delta_{q}\omega_0\|_{L^\infty},
\end{equation}
which  is \eqref{bs10-1}.  
\end{proof}
In the next proposition we give   some  precise estimates of the velocity.  
\begin{prop}
\label{ur1}  The Euler solution with initial data $u_{0}\in B_{p,1}^{1+\frac3p}$ such \mbox{that $\frac{\omega_{0}}{r}\in L^{3,1}$} satisfies for every  $t\in\RR_{+},$
\begin{itemize}
\item[i)] Case $p=\infty,$
$$
\|\omega(t)\|_{B^0_{\infty,1}}+\| u(t)\|_{B_{\infty,1}^1}
\leq C_{0}{e^{\exp{C_{0}t}}}.  
$$
\item[ii)] Case $1\leq p<\infty,$
$$
\|u(t)\|_{B_{p,1}^{1+\frac{3}{p}}}\leq C_{0}e^{e^{\exp{C_{0}t}}},
$$
with $C_{0}$ depends on the norms of $u_{0}.  $ 
\end{itemize}
\end{prop}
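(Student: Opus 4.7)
The plan is to treat the two cases in sequence: case (i) produces the crucial Lipschitz control of the velocity via a Gronwall-type argument driven by the decomposition of Proposition \ref{Thm89}, and case (ii) then propagates higher Besov regularity by standard paradifferential calculus, using case (i) as input.

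For case (i), the key step is to bound $\|\omega(t)\|_{B^0_{\infty,1}}$ by combining estimates iii) and iv) of Proposition \ref{Thm89}. Writing $\omega(t)=\sum_q\tilde\omega_q(t)$ and applying $\Delta_j$, I get
$$\|\Delta_j\omega(t)\|_{L^\infty}\le\sum_q\|\Delta_j\tilde\omega_q(t)\|_{L^\infty}.$$
Estimate iii) furnishes $\|\Delta_j\tilde\omega_q(t)\|_{L^\infty}\lesssim\|\Delta_q\omega_0\|_{L^\infty}e^{V(t)}$ with $V(t):=Ct\|\omega_0/r\|_{L^{3,1}}$ (singly exponential, uniform in $j,q$), while iv) furnishes $\|\Delta_j\tilde\omega_q(t)\|_{L^\infty}\lesssim 2^{-|j-q|}e^{CU(t)}\|\Delta_q\omega_0\|_{L^\infty}$ with $U(t):=\|u\|_{L^1_tB^1_{\infty,1}}$. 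Taking the minimum at each $(j,q)$ and choosing the splitting threshold $|j-q|\sim CU(t)-V(t)$, the near-diagonal block contributes $\sim(1+U(t))e^{V(t)}\|\Delta_q\omega_0\|_{L^\infty}$ and the off-diagonal geometric tail matches. Summing in $q$ yields
$$\|\omega(t)\|_{B^0_{\infty,1}}\lesssim(1+U(t))\,e^{C_0 t}\,\|\omega_0\|_{B^0_{\infty,1}}.$$

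Next, the Biot–Savart law combined with Bernstein's inequality gives $\|u(t)\|_{B^1_{\infty,1}}\lesssim\|u(t)\|_{L^\infty}+\|\omega(t)\|_{B^0_{\infty,1}}$; coupling this with the doubly-exponential $L^\infty$ bound from Proposition \ref{u/r} produces the scalar differential inequality
$$U'(t)\le C_0 e^{\exp(C_0 t)}+C_0 e^{C_0 t}\bigl(1+U(t)\bigr).$$
This is linear in $U$; the integrating factor $\exp(-\int_0^t C_0 e^{C_0 s}ds)=e^{1-e^{C_0 t}}$ cancels the forcing term exactly, giving $U(t)\le C_0 e^{\exp(C_0 t)}$. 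Plugging back in the estimate for $\omega$ proves $\|u(t)\|_{B^1_{\infty,1}}+\|\omega(t)\|_{B^0_{\infty,1}}\le C_0 e^{\exp(C_0 t)}$.

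For case (ii), part (i) already gives a doubly-exponential bound on $U(t)$, hence on $\|\nabla u\|_{L^\infty}$ in time. I would then propagate $\omega$ in $B^{3/p}_{p,1}$ by the standard transport estimate of Proposition \ref{Lems2} applied to equation \eqref{tourbillon}, namely
$$\|\omega(t)\|_{B^{3/p}_{p,1}}\lesssim e^{CU(t)}\Bigl(\|\omega_0\|_{B^{3/p}_{p,1}}+\int_0^t e^{-CU(\tau)}\|(\omega\cdot\nabla u)(\tau)\|_{B^{3/p}_{p,1}}\,d\tau\Bigr).$$
The tame product estimate in Besov spaces gives $\|\omega\cdot\nabla u\|_{B^{3/p}_{p,1}}\lesssim\|\omega\|_{L^\infty}\|\nabla u\|_{B^{3/p}_{p,1}}+\|\nabla u\|_{L^\infty}\|\omega\|_{B^{3/p}_{p,1}}$, while Biot–Savart at high frequency yields $\|\nabla u\|_{B^{3/p}_{p,1}}\lesssim\|u\|_{L^p}+\|\omega\|_{B^{3/p}_{p,1}}$. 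Finally, an $L^p$ estimate on the Euler equation (from $\frac{d}{dt}\|u\|_{L^p}\lesssim\|\nabla u\|_{L^\infty}\|u\|_{L^p}$ via Calderón–Zygmund control of the pressure) bounds $\|u\|_{L^p}$ by a triply-exponential function of $t$. Closing via Gronwall produces the desired estimate $\|u(t)\|_{B^{1+3/p}_{p,1}}\le C_0 e^{e^{\exp(C_0 t)}}$.

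The main obstacle is the Gronwall closure in case (i): relying only on iv) of Proposition \ref{Thm89} would give $\|\omega(t)\|_{B^0_{\infty,1}}\lesssim e^{CU(t)}\|\omega_0\|_{B^0_{\infty,1}}$, which feeds back as $U'(t)\lesssim e^{CU(t)}$ and blows up in finite time. The saving point is that estimate iii), although it lacks frequency decay, provides a time-only exponential that controls the near-diagonal block of the double sum, leaving just a logarithmic-in-$U$ correction; this is precisely what turns the a priori superlinear feedback into a linear ODE in $U$, and it is the reason both estimates iii) and iv) are required.
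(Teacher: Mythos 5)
Your argument for part (i) is essentially the paper's: the same splitting of the double sum $\sum_{j,q}\|\Delta_j\tilde\omega_q\|_{L^\infty}$ into a near-diagonal block controlled by estimate iii) and an off-diagonal tail controlled by estimate iv), the same optimization of the threshold (the paper simply takes $N=[CU(t)]+1$, which is what your pointwise minimum amounts to), the same bound $\|\omega(t)\|_{B^0_{\infty,1}}\lesssim(1+U(t))e^{C_0t}$, and the same linear Gronwall closure for $U$. Your closing remark on why iii) is indispensable is precisely the point of the construction. For part (ii) with $1<p<\infty$ you also follow the paper: transport estimate for $\omega$ in $B^{3/p}_{p,1}$, a tame bound on the stretching term (the paper absorbs $T_\omega\cdot\nabla u$ directly via the uniform $L^p$-boundedness of $\Delta_q R$ rather than your two-step reduction through $\|\nabla u\|_{B^{3/p}_{p,1}}$, but the outcome is identical), and an $L^p$ bound on $u$ to control the low frequencies.

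The one genuine gap is the endpoint $p=1$, which the statement includes. Your $L^p$ estimate on $u$ rests on Calder\'on--Zygmund control of the pressure, i.e.\ on the $L^p$-boundedness of the Leray projector, and this fails on $L^1$. The paper treats $p=1$ separately: the high frequencies of $u$ are bounded in $L^1$ by $\|\omega(t)\|_{L^1}$, which propagates by the transport structure of the vorticity equation, while the low-frequency block $\dot S_0u$ is estimated from the integral equation using that $\dot\Delta_q\mathbb{P}\,\mathrm{div}$ gains a factor $2^q$ for $q\le-1$, so that $\sum_{q\le-1}\|\dot\Delta_q\mathbb{P}((u\cdot\nabla)u)\|_{L^1}\lesssim\sum_{q\le-1}2^q\|u\otimes u\|_{L^1}\lesssim\|u_0\|_{L^2}^2$ by energy conservation. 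You need some such device (or must exclude $p=1$) to prove the proposition in the stated range.
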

\begin{proof}
i) 
Let $N$ be a fixed positive integer that will be carefully chosen later.   Then   we have from (\ref{LU})
\begin{eqnarray}\label{t2}
\nonumber
\|\omega(t)\|_{B_{\infty,1}^0}
&\leq&
\sum_{j}\|\Delta_{j}\sum_{q}\tilde{\omega}_{q}(t)\|_{L^\infty}
\\\nonumber
&\leq&
\sum_{|j-q|\geq N}\|\Delta_{j}\tilde{\omega}_{q}(t)\|_{L^\infty}+\sum_{|j-q|<N}\|\Delta_{j}\tilde{\omega}_{q}(t)\|_{L^\infty}
\\
&:=&\hbox{I}+\hbox{II}.  
\end{eqnarray}

To estimate the first term we use Proposition \ref{Thm89}  and  the convolution inequality for the series
\begin{equation}
\label{t3}
\hbox{I}
\lesssim
2^{- N}\|\omega_{0}\|_{B_{\infty,1}^0}e^{CU(t)}.  
\end{equation}

To estimate the term $\hbox{II}$ we use two facts: the first
 one is that the operator $\Delta_{j}$ maps uniformly  $L^\infty$ into itself while the second  is  the $L^\infty$ 
 estimate (\ref{ben0}),
\begin{equation}\label{ben55}
\begin{aligned}
\hbox{II}
&\lesssim
\sum_{ |j-q|< N}\|\tilde{\omega}_{q}(t)\|_{L^\infty}
\\&
\lesssim e^{C_{0}t}\sum_{ |j-q|< N}\|\Delta_q\omega_0\|_{L^\infty}
\\&
 \lesssim  e^{C_{0}t}N\|\omega_0\|_{B_{\infty,1}^0}.  
\end{aligned}
\end{equation}
Combining this estimate with (\ref{ben55}),  (\ref{t3}) and (\ref{t2}) we obtain
$$
\begin{aligned}
\|\omega(t)\|_{B_{\infty,1}^0}
&\lesssim 2^{- N}e^{CU(t)}+  N e^{C_{0}t}
.  \end{aligned}
$$
Putting
$$
N=\Big[{CU(t)}\Big]+1,
$$
we obtain
$$
\|\omega(t)\|_{B_{\infty,1}^0}
\lesssim \big(U(t)+1\big) e^{C_{0}t}.  
$$
On the other hand we have
$$
\| u\|_{B_{\infty,1}^1}
\lesssim
\|u\|_{L^\infty}+\|\omega\|_{B^0_{\infty,1}},
$$
which yields in view of Proposition \ref{u/r}, 
$$
\begin{aligned}
\|u(t)\|_{B_{\infty,1}^1}&\lesssim\|u(t)\|_{L^\infty}+ \|\omega(t)\|_{B^0_{\infty,1}}
\\&
\leq C_{0}e^{\exp{C_{0}t}}+C_{0}e^{C_{0}t}\int^t_0\|u(\tau)\|_{B^1_{\infty,1}}d\tau.  
\end{aligned}
$$
Hence we obtain by Gronwall's inequality
$$
\|u(t)\|_{B_{\infty,1}^1}\leq C_{0}e^{\exp{C_{0}t}},
$$
which gives in turn 
$$
\|\omega(t)\|_{B_{\infty,1}^0}\leq C_{0}e^{\exp{C_{0}t}}.  
$$
This concludes the first part of Proposition \ref{ur1}.  

\

ii)
 Applying Proposition  \ref{Lems2} to the vorticity equation we get 
\begin{equation}\label{Duk0}
e^{-CU_{1}(t)}\|\omega(t)\|_{B_{p,1}^{\frac3p}}\lesssim \|\omega_{0}\|_{B_{p,1}^{\frac3p}}+\int_{0}^te^{-CU_{1}(\tau)}\|\omega\cdot\nabla u(\tau)\|_{B_{p,1}^{\frac{3}{p}}}d\tau.  
\end{equation}
As $\omega=\hbox{curl }u,$ we have
\begin{eqnarray}\label{Duk1}
\|\omega\cdot\nabla u\|_{B_{p,1}^{\frac{3}{p}}}&\lesssim &\|\omega\|_{B_{p,1}^{\frac3p}}\|\nabla u\|_{L^\infty}.  
\end{eqnarray}
Indeed, from Bony's decomposition we write
\begin{eqnarray*}
\|\omega\cdot\nabla u\|_{B_{p,1}^{\frac{3}{p}}}&\leq&\|T_{\nabla u}\cdot\omega\|_{B_{p,1}^{\frac3p}}+\|T_{\omega}\cdot\nabla u\|_{B_{p,1}^{\frac3p}}+\|\mathcal{R}(\omega,\nabla u)\|_{B_{p,1}^{\frac3p}}
\\
&\lesssim&\|\nabla u\|_{L^\infty}\|\omega\|_{B_{p,1}^{\frac{3}{p}}}+\|T_{\omega}\cdot\nabla u\|_{B_{p,1}^{\frac3p}}.  
\end{eqnarray*}
From the definition we write
\begin{eqnarray*}
\|T_{\omega}\cdot\nabla u\|_{B_{p,1}^{\frac3p}}&\lesssim&\sum_{q\in\NN}2^{q\frac{3}{p}}\|S_{q-1}\omega\|_{L^\infty}\|\nabla \Delta_{q}u\|_{L^p}\\
&\lesssim&\|\omega\|_{L^\infty}\sum_{q\in\NN}2^{q\frac{3}{p}}\|\Delta_{q}\omega\|_{L^p}\\
&\lesssim&\|\nabla u\|_{L^\infty}\|\omega\|_{B_{p,1}^{\frac{3}{p}}}.  
\end{eqnarray*}
We have used here the fact that for $p\in[1,\infty]$ and  $q\in\NN$ the composition operator $\Delta_{q}R: L^p\to L^p$ is continuous uniformly with respect to $p$ and $q,$ where $R$ denotes Riesz transform.   Combining (\ref{Duk0}) and (\ref{Duk1}) we find,
\begin{equation*}
e^{-CU_{1}(t)}\|\omega(t)\|_{B_{p,1}^{\frac3p}}\lesssim \|\omega_{0}\|_{B_{p,1}^{\frac3p}}+\int_{0}^te^{-CU_{1}(\tau)}\|\omega(\tau)\|_{B_{p,1}^{\frac{3}{p}}}
\|\nabla u(\tau)\|_{L^\infty}d\tau.  
\end{equation*}
 Gronwall's inequality yields
 $$
\|\omega(t)\|_{B^{{3\over p}}_{p,1}}
\leq
\|u_0\|_{B^{{3\over p}+1}_{p,1}}e^{C\int_0^t\|\nabla u(\tau)\|_{L^\infty}d\tau}\leq C_{0}e^{e^{\exp{C_{0}t}}}.  
$$
Let us  estimate the velocity.   We write 
\begin{eqnarray*}
\|u(t)\|_{B_{p,1}^{1+\frac3p}}&\lesssim&\|\Delta_{-1}u\|_{L^p}+\sum_{q\in\NN}2^{q\frac{3}{p}} 2^q\|\Delta_{q}u\|_{L^p}\\
&\lesssim& \|u(t)\|_{L^p}+\|\omega(t)\|_{B_{p,1}^{\frac3p}}.
\end{eqnarray*}
Thus it remains to estimate $\|u\|_{L^p}.  $ For $1<p<\infty,$ since Riesz transforms act continuously on $L^p,$ we get
\begin{eqnarray*}
\|u(t)\|_{L^p}&\leq& \|u_{0}\|_{L^p}+C\int_{0}^t\|u\cdot\nabla u(\tau)\|_{L^p}d\tau\\
&\lesssim&\|u_{0}\|_{L^p}+
\int_{0}^t\|u(\tau)\|_{L^p}\|\nabla u(\tau)\|_{L^\infty}d\tau.  
\end{eqnarray*}
It suffices now to use Gronwall's inequality.  

For the case $p=1,$ we write
$$
\begin{aligned}
\|u(t)\|_{L^1}
&\leq
\|\dot S_0u(t)\|_{L^1}
+
\sum_{q\geq 0}\|\dot\Delta_qu(t)\|_{L^1}
\\&
\lesssim
\|\dot S_0u(t)\|_{L^1}
+
\sum_{q\geq 0}2^{-q}\|\dot\Delta_q\nabla u(t)\|_{L^1}
\\&
\lesssim
\|\dot S_0u(t)\|_{L^1}
+
\|\omega(t)\|_{L^1}.  
\end{aligned}
$$
However, it is easy to see that $$
\|\omega(t)\|_{L^1}
\leq
\|\omega_0\|_{L^1}e^{\int_0^t\|\nabla u(\tau)\|_{L^\infty}d\tau}.  
$$
Concerning $\dot S_0u$ we use  the equation  on  $u$  leading to 
$$
\begin{aligned}
\|\dot S_0u(t)\|_{L^1}
&\lesssim
\|\dot S_0u_0\|_{L^1}
+\sum_{q\leq -1}\|\dot\Delta_q\mathbb{P}((u\cdot\nabla)u(t))\|_{L^1}
\\&
\lesssim
\|u_0\|_{L^1}
+\sum_{q\leq -1}2^q\|\dot\Delta_q(u\otimes u(t))\|_{L^1}
\\&
\lesssim
\|u_0\|_{L^1}+\|u(t)\|_{L^2}^2
\\&
\lesssim
\|u_0\|_{L^1}+\|u_0\|_{L^2}^2.  
\end{aligned}
$$
This yields 
$$
\|u(t)\|_{L^1}
\le C_{0}e^{e^{\exp{C_{0}t}}}.  
$$
The proof is now achieved.  

\end{proof}
\appendix
\section{Appendix}
\hskip 12pt
The following result describes  the anisotropic dilatation in Besov spaces.  
\begin{prop}\label{dilatation}
Let $f:\RR^3\to \RR$ be a function belonging to $B_{\infty,1}^0$ and denote by $f_{\lambda}(x_{1},x_{2},x_{3})=f(\lambda x_{1},x_{2},x_{3}).  $ Then, there exists an absolute constant $C>0$ such that for all $\lambda\in]0,1[$ 
$$
\|f_{\lambda}\|_{B_{\infty,1}^0}\leq C(1-\log\lambda)\|f\|_{B_{\infty,1}^0}.  
$$
\end{prop}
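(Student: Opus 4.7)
The plan is to control $\|\Delta_j f_\lambda\|_{L^\infty}$ via a frequency-support analysis of the anisotropic dilation, and then to sum in $j$. Set $g_k := \Delta_k f$ and write $f_\lambda = \sum_{k\geq -1}(g_k)_\lambda$, so that
$$
\Delta_j f_\lambda =\sum_{k\geq -1}\Delta_j (g_k)_\lambda.
$$
The key remark is that the rescaling affects only the first frequency direction: one has $\widehat{(g_k)_\lambda}(\xi_1,\xi_2,\xi_3)=\lambda^{-1}\widehat{g_k}(\lambda^{-1}\xi_1,\xi_2,\xi_3)$. Since $\widehat{g_k}$ is supported in an annulus of size $2^k$ (for $k\geq 0$), the spectrum of $(g_k)_\lambda$ sits inside
$$
\bigl\{\xi\in\RR^3\,:\, \lambda^{-2}\xi_1^2+\xi_2^2+\xi_3^2\sim 2^{2k}\bigr\}.
$$
Using the elementary inequalities $|\xi|\leq \sqrt{\lambda^{-2}\xi_1^2+\xi_2^2+\xi_3^2}\leq \lambda^{-1}|\xi|$ (valid for $0<\lambda<1$), this support is contained in the shell $|\xi|\in[c\lambda 2^k,C 2^k]$.

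Thus $\Delta_j(g_k)_\lambda$ is nonzero only when $2^j$ meets this shell, i.e.\ when $k-C_0\leq j\leq k-\log_2\lambda + C_0$ for some absolute $C_0$. Equivalently, for a fixed $j$, only indices $k$ with $j-C_0+\log_2\lambda \leq k\leq j+C_0$ contribute, and the number of such $k$ is bounded by $C(1-\log\lambda)$.

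The second ingredient is that each term is controlled pointwise: $\Delta_j$ is a Fourier multiplier which is bounded on $L^\infty$ uniformly in $j$, and the anisotropic dilation is an isometry of $L^\infty$, so
$$
\|\Delta_j(g_k)_\lambda\|_{L^\infty}\lesssim \|(g_k)_\lambda\|_{L^\infty}=\|\Delta_k f\|_{L^\infty}.
$$
Plugging this into the preceding support-count yields
$$
\|\Delta_j f_\lambda\|_{L^\infty}\lesssim \sum_{k\,:\,j-C_0+\log_2\lambda\leq k\leq j+C_0}\|\Delta_k f\|_{L^\infty}.
$$
Summing in $j$ and interchanging the order of summation, each $\|\Delta_k f\|_{L^\infty}$ appears at most $\lesssim (1-\log\lambda)$ times, which delivers the bound
$$
\|f_\lambda\|_{B^0_{\infty,1}}\leq C(1-\log\lambda)\|f\|_{B^0_{\infty,1}}.
$$

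\textbf{Remaining technicality.} The low-frequency block $\Delta_{-1}f$ must be handled separately, but since its spectrum is already contained in a fixed ball, the dilated spectrum remains inside a fixed ball and only interacts with finitely many $\Delta_j$, contributing a term bounded by $C\|f\|_{L^\infty}\lesssim \|f\|_{B^0_{\infty,1}}$. The only genuinely delicate step is the Fourier support count giving the factor $(1-\log\lambda)$; once this geometric observation is made, the rest is routine bookkeeping with Littlewood--Paley operators.
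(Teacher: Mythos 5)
Your argument is correct and follows essentially the same route as the paper's own proof: decompose $f$ into Littlewood--Paley blocks, observe that the anisotropic dilation spreads the spectrum of the $k$-th block over the shell $|\xi|\in[c\lambda 2^k,\,C2^k]$ so that each $\Delta_j$ interacts with only $O(1-\log\lambda)$ blocks, and conclude using the $L^\infty$-invariance of the dilation together with the uniform $L^\infty$-boundedness of $\Delta_j$. One minor slip: the overlap condition should read $k+\log_2\lambda-C_0\le j\le k+C_0$ (the dilation pushes the spectrum to \emph{lower} frequencies) rather than $k-C_0\le j\le k-\log_2\lambda+C_0$, but the set of contributing indices $k$ for fixed $j$ still has cardinality $\lesssim 1-\log\lambda$, so the final estimate is unaffected.
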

\begin{proof}
Let $q\geq-1,$ we denote by $f_{q,\lambda}=(\Delta_{q}f)_{\lambda}.  $ From the definition we have
\begin{eqnarray*}
\|f_{\lambda}\|_{B_{\infty,1}^0}&=&\|\Delta_{-1}f_{\lambda}\|_{L^\infty}+\sum_{j\in\NN}\|\Delta_{j}f_{\lambda}\|_{L^\infty}\\
&\leq&C\|f\|_{L^\infty}+\sum_{j\in\NN\atop q\geq-1}\|\Delta_{j}f_{q,\lambda}\|_{L^\infty}.  
\end{eqnarray*}
For $j,q\in\NN,$ the Fourier transform of $\Delta_{j}f_{q,\lambda}$ is supported in the set
$$
\Big\{|\xi_{1}|+|\xi'|\approx 2^j\quad\hbox{and}\quad \lambda^{-1} |\xi_{1}|+|\xi'|\approx 2^q \Big\},
$$
where $\xi'=(\xi_{2},\xi_{3}).  $
A direct consideration  shows that this set is empty \mbox{if $2^q\lesssim 2^j$ or $2^{j-q}\lesssim \lambda$}.   For $q=-1$ the set is empty if $j\geq n_{0},$ this last number is absolute.   Thus we get for an integer $n_{1}$
\begin{eqnarray*}
\|f_{\lambda}\|_{B_{\infty,1}^0}&\lesssim&\|f\|_{L^\infty}+\sum_{q-n_{1}+\log\lambda\leq j\atop
j\leq q+n_{1}}\|\Delta_{j}f_{q,\lambda}\|_{L^\infty}\\
&\lesssim&\|f\|_{L^\infty}+(n_{1}-\log\lambda)\sum_{q}\|f_{q,\lambda}\|_{L^\infty}\\
&\lesssim&\|f\|_{L^\infty}+(n_{1}-\log\lambda)\sum_{q}\|f_q\|_{L^\infty}\\
&\lesssim&(1-\log\lambda)\|f\|_{B_{\infty,1}^0}.  
\end{eqnarray*}
\end{proof}

The following  proposition describes the propagation of Besov regularity for transport equation.   
 \begin{prop}\label{Lems2}
Let $s\in]-1,1[, p,r\in [1,\infty]$ and u be a smooth divergence free vector field.    Let $f$ be a  smooth solution of the transport equation 
 $$
\partial_{t}f+u\cdot\nabla f=g,\, f_{|t=0}=f_0,
$$
such that $f_0\in B_{p,r}^s(\mathbb R^3)$ and 
$g\in{L^1_{\textnormal{loc}}}(\mathbb R_{+};B_{p,r}^{s}).  $     
\mbox{Then $\forall t\in\mathbb R_{+},$}
\begin{equation}\label{df}
\|f(t)\|_{B_{p,r}^s}     
\leq 
Ce^{CU_{1}(t)}
\Big(\|f_0\|_{B_{p,r}^s}+\int_{0}^te^{-CU_{1}(\tau)}\|g(\tau)\|_{B_{p,r}^{s}}d\tau\Big),
\end{equation}
where $ U_{1}(t)=\int_{0}^t\|\nabla u(\tau)\|_{L^\infty}d\tau$ and $C$ is a constant depending  on $s.  $

The above estimate  holds also true in the limiting cases: 
$$
s=-1,r=\infty, p\in[1,\infty])\quad\hbox{or} \quad s=1,r=1,p\in[1,\infty]
,$$
 provided that we change $U_{1}$ \mbox{by $U(t):=\|u\|_{L^1_{t}B_{\infty,1}^1.  }$}

In addition if $f=\textnormal{curl } u,$ then the above estimate $(\ref{df})$ holds true for \mbox{all $s\in[1,+\infty[.  $} 
\end{prop}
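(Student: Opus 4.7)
The plan is to perform a dyadic frequency localization of the transport equation. Applying $\Delta_q$ yields
\begin{equation*}
\partial_t \Delta_q f + u\cdot\nabla \Delta_q f = \Delta_q g + R_q, \qquad R_q := [u\cdot\nabla,\Delta_q]f.
\end{equation*}
Since $\diver u = 0$, the scalar $L^p$ transport estimate (multiply by $|\Delta_q f|^{p-2}\Delta_q f$ and integrate, using that the transport term vanishes) gives
\begin{equation*}
\|\Delta_q f(t)\|_{L^p} \leq \|\Delta_q f_0\|_{L^p} + \int_0^t \bigl(\|\Delta_q g(\tau)\|_{L^p} + \|R_q(\tau)\|_{L^p}\bigr)\, d\tau.
\end{equation*}
The whole game is then to bound $R_q$ by quantities compatible with the Besov norm.

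The central ingredient is a commutator estimate of the form
\begin{equation*}
\|R_q(t)\|_{L^p} \leq C\, c_q(t)\, 2^{-qs} \|\nabla u(t)\|_{L^\infty} \|f(t)\|_{B_{p,r}^s},
\end{equation*}
with $(c_q(t))_q$ of unit norm in $\ell^r$, valid for $s\in(-1,1)$. This is proved via Bony's decomposition $u\cdot\nabla f = T_u\cdot\nabla f + T_{\nabla f} u + \mathcal{R}(u,\nabla f)$: the dominant piece $[T_u\cdot\nabla,\Delta_q]$ is handled by a first-order Taylor expansion of the convolution kernel $2^{3q}h(2^q\cdot)$, producing the factor $2^{-q}$ that matches the derivative; the paraproduct $T_{\nabla f}u$ is classical; the remainder is rewritten in divergence form using $\diver u = 0$. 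The restriction $s<1$ makes the first paraproduct series summable in $\ell^r$, and $s>-1$ ensures summability of the remainder. Multiplying the $L^p$ bound by $2^{qs}$, taking $\ell^r$-norm and applying Gronwall's inequality to the quantity $e^{-CU_1(t)}\|f(t)\|_{B_{p,r}^s}$ yields the stated estimate.

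For the endpoint cases $s=-1,r=\infty$ or $s=1,r=1$, the commutator estimate loses a logarithmic factor in $q$ that cannot be absorbed by $\|\nabla u\|_{L^\infty}$ alone. The remedy is to split $u = S_N u + (\mathrm{Id}-S_N)u$ with $N\sim q$, bound the low-frequency contribution by $\|\nabla u\|_{L^\infty}$ and the high-frequency one directly by $\|u\|_{B_{\infty,1}^1}$, then sum in $q$; this replaces $U_1(t)$ by $U(t)$. For the curl case $s\geq 1$, the problematic term in Bony's decomposition is the paraproduct $T_{\nabla f}u$, which would seem to require positive regularity of $\nabla u$. But $f=\mathrm{curl}\,u$ allows one to recover $\nabla u$ from $f$ via a $0$-order Fourier multiplier (Riesz-transform structure, modulo harmless low frequencies); since these multipliers act continuously on $B_{p,r}^s$, one obtains $\|\nabla u\|_{B_{p,r}^s}\lesssim \|f\|_{B_{p,r}^s}$, and the paraproduct is absorbed into $\|\nabla u\|_{L^\infty}\|f\|_{B_{p,r}^s}$.

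The main obstacle is the commutator estimate, and specifically its degeneration at $s=\pm 1$. Away from the endpoints the off-diagonal decay of $\|\Delta_j R_q\|_{L^p}$ in $|j-q|$ is geometric, but at the endpoints the summation produces only a logarithm, which is precisely why $\|\nabla u\|_{L^\infty}$ must be upgraded to $\|u\|_{B_{\infty,1}^1}$. Once this commutator analysis is in place, the Gronwall step and the curl-case extension follow by standard paradifferential manipulations.
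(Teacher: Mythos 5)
Your proposal follows essentially the same route as the paper's appendix proof: dyadic localization, the $L^p$ transport estimate using $\diver u=0$, a Bony decomposition of the commutator $[\Delta_q,u\cdot\nabla]f$ into paraproduct, remainder (put in divergence form) and classical commutator pieces, and Gronwall applied to $e^{-CU_1(t)}\|f(t)\|_{B^s_{p,r}}$, with the endpoint cases $s=\pm1$ handled by upgrading $\|\nabla u\|_{L^\infty}$ to $\|u\|_{B^1_{\infty,1}}$ on the terms whose dyadic sums lose geometric decay, and the curl case using the Riesz-transform structure to trade $\nabla u$ for $f$. The only differences are cosmetic bookkeeping (you fold the term $T'_{\Delta_q\partial_j f}u^j$ into the others, and you phrase the endpoint loss as a low--high split of $u$ rather than summing the full $\ell^1$ series directly), so the argument is correct and matches the paper's.
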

\begin{proof}
We will only restrict ourselves to the proof of the limiting \mbox{cases $s=\mp1.  $} The remainder cases are done for example in \cite{Ch1,vishik}.  

We start with localizing in frequency the equation leading to,
$$
\begin{aligned}
\partial_{t}\Delta_q f+(u\cdot\nabla)\Delta_qf
&=\Delta_qg+(u\cdot\nabla)\Delta_qf-\Delta_q\big(u\cdot\nabla f\big)
\\&
=\Delta_qg-[\Delta_q,u\cdot\nabla]f.  
\end{aligned}
 $$
Taking the $L^p$ norm, then the zero divergence of the flow gives  
$$
\|\Delta_qf(t)\|_{L^p}
\leq
\|\Delta_qf_0\|_{L^p}
+\int_0^t\|\Delta_qg\|_{L^p}d\tau
+\int_0^t\Big\|[\Delta_q,u\cdot\nabla]f\Big\|_{L^p}d\tau.  
$$
From Bony's decomposition, the commutator may be decomposed as follows 
\begin{eqnarray*}
[\Delta_q,u\cdot\nabla] f&=&\Delta_q \mathcal{R}(u^j,\partial_{j}f)+\Delta_qT_{\partial_j f}u^j
-T'_{\Delta_q\partial_j f}u^j+[\Delta_q, T_{u^j}]\partial_j f\\
&:=&\sum_{i=1}^4 \mathcal{R}^i_q,
\end{eqnarray*}
where $T'_{u}v$ stands for $T_{u}v+\mathcal{R}(u,v).  $To treat the first term
$\mathcal{R}^1_q,$ we write from the definition
$$
\mathcal{R}^1_q=\sum_{k\geq q-3}
\Delta_q\partial_{j}(\Delta_kf\widetilde\Delta_ku^j).  
$$
According to Bernstein inequalities we get for $s=-1,$
\begin{equation}\label{L1}
\sup_{q\geq-1}2^{-q}\Vert \mathcal{R}^1_q\Vert_{L^p}
\lesssim
\Vert f\Vert_{B^{-1}_{p,\infty}}\Vert u\Vert_{B^1_{\infty,1}}.  
\end{equation}
To estimate $\mathcal{R}^2_q,$ we write by definition
$$
\mathcal{R}^2_q=\Delta_qT_{\partial_j f}v^j=\sum_{\vert q-k\vert\leq 4}
\Delta_q(S_{k-1}\partial_j f\Delta_ku^j).  
$$
Applying Bernstein and Young inequalities leads to
\begin{eqnarray}\label{L2}
\nonumber\sup_{q}2^{-q}\Vert \mathcal{R}^2_q\Vert_{L^p}
\nonumber&\lesssim&
\sup_{q}2^{-q}
\|S_{q-1}f\|_{L^p}2^{q}\|\Delta_qu^j\|_{L^\infty}\\
\nonumber&
\lesssim&
\|u\|_{B_{\infty,\infty}^1}
\sup_{q}\sum_{-1\leq m\le q-2}2^{m-q}2^{-m}\|\Delta_mf\|_{L^p}
\\
&
\lesssim&\|f\|_{B_{p,\infty}^{-1}}\|u\|_{B_{\infty,\infty}^1}.  
\end{eqnarray}
It is easy to verify from the definition that $\mathcal{R}^3_q$ can be rewritten like
$$
\mathcal{R}^3_q=T'_{\Delta_q\partial_j f}u^j=\sum_{k\geq q-2}S_{k+2}\Delta_q\partial_j f\Delta_k u^j.  
$$
Thus applying Bernstein inequality one has
$$
2^{-q}\Vert \mathcal{R}^3_q\Vert_{L^\infty}
\lesssim
2^{-q}\Vert\Delta_q f\Vert_{L^p}
\sum_{k\geq q-2}2^{q-k}\, 2^k\Vert\Delta_k u\Vert_{L^\infty},
$$
Therefore we get from the convolution inequality
\begin{equation}\label{L3}
\sup_{q\geq-1}2^{-q}\Vert \mathcal{R}^3_q\Vert_{L^p}
\lesssim
\|f\|_{B_{p,\infty}^{-1}}\|u\|_{B_{\infty,\infty}^1}.  
\end{equation}
For the last term we write 
$$
\mathcal{R}^4_q=[\Delta_{q},T_{u^j}]\partial_j f=
\sum_{\vert k-q\vert\leq 4}[\Delta_{q},S_{k-1}u^j]\Delta_k\partial_j f.  
$$
The following is classical  (see for example \cite{Ch1}),
\begin{eqnarray*}
\Vert[S_{k-1}u^j,\Delta_q]\Delta_k\partial_j f\Vert_{L^\infty}
&\lesssim &2^{-q}\|\nabla S_{k-1}u\|_{L^\infty}\|\partial_{j}\Delta_{k}f\|_{L^p}\\
&\lesssim&2^{k-q}\Vert\nabla u\Vert_{L^\infty}\Vert\Delta_k f\Vert_{L^p}.  
\end{eqnarray*}
This yields
\begin{equation}\label{L4}
\sup_{q\geq-1}2^{-q}\Vert \mathcal{R}^4_q\Vert_{L^\infty}
\lesssim
\Vert f\Vert_{B^s_{p,\infty}}\Vert\nabla u\Vert_{L^\infty}.  
\end{equation}
Putting together the estimates (\ref{L1}), (\ref{L2}), (\ref{L3}) and (\ref{L4}) gives
$$
\sup_{q\geq-1}2^{-q}\Big\|[\Delta_q,u\cdot\nabla]f\Big\|_{L^p}
\lesssim
\|f\|_{B^{-1}_{p,\infty}}\|u\|_{B_{\infty,1}^1}.  
$$
This implies 
$$
\|f(t)\|_{B^{-1}_{p,\infty}}
\lesssim
\|f_0\|_{B^s_{\infty,\infty}}+\int_0^t\|g(\tau)\|_{B^{-1}_{p,\infty}}d\tau
+\int_0^t\|f(\tau)\|_{B^{-1}_{p,\infty}}\|u(\tau)\|_{B_{\infty,1}^1}d\tau.  
$$
It suffices now to use  Gronwall's inequality  in order  to the desired the result.  

Let us now move to the case $s=1$ that will be briefly explained.    We estimate $\mathcal{R}_{q}^1$ as follows
\begin{eqnarray*}
\sum_{q}2^q\|\mathcal{R}_{q}^1\|_{L^p}&\lesssim &\sum_{k\geq q-3}2^{q-k}2^k\|\Delta_{k}f\|_{L^p}2^k\|\widetilde\Delta_{k}u_{j}\|_{L^\infty}\\
&\lesssim&\|f\|_{B_{p,1}^1}\|u\|_{B_{\infty,\infty}^1}.  
\end{eqnarray*}
Concerning the second term we write
\begin{eqnarray*}
\sum_{q}2^q\|\mathcal{R}_{q}^2\|_{L^p}&\lesssim &\sum_{q}2^{q}\|S_{q-1}\partial_{j}f\|_{L^p}\|\Delta_{q}u^j\|_{L^\infty}\\
&\lesssim&\|\nabla f\|_{L^p}\|u\|_{B_{\infty,1}^1}\\
&\lesssim&\|f\|_{B_{p,1}^1}\|u\|_{B_{\infty,1}^1}.  
\end{eqnarray*}
The third and the last  terms are treated  similarly  to the first case.  
\end{proof}

\end{document}